\theoremstyle{plain}
\newtheorem{thm}{Theorem}
  \theoremstyle{definition}
  \theoremstyle{remark}
  \newtheorem{rem}[thm]{Remark}
  \theoremstyle{plain}
  \newtheorem{prop}[thm]{Proposition}
  \theoremstyle{plain}
  \newtheorem{lem}[thm]{Lemma}
  \theoremstyle{plain}
  \newtheorem{cor}[thm]{Corollary}
 \theoremstyle{definition}
  \theoremstyle{remark}
  \newtheorem*{rem*}{Remark}
  \theoremstyle{definition}
\newtheorem*{question*}{\it{QUESTION}}
\theoremstyle{plain}
\newcommand{\N}{\mathbb{N}}
\newcommand{\R}{{\mathbb{R}}}
\newcommand{\C}{{\mathbb{C}}}
\newcommand{\Z}{{\mathbb{Z}}}
\newcommand{\dd}{{\rm d}}
\newcommand{\ii}{{\rm i}}
\renewcommand{\Re}{\mathop\mathrm{Re}\nolimits}
\renewcommand{\Im}{\mathop\mathrm{Im}\nolimits}
\newcommand{\supp}{\mathop\mathrm{supp}\nolimits}
\newcommand*{\pFqskip}{8mu} 
\newcommand*{\pFq}{\begingroup
        \catcode`\,\active
        \def ,{\mskip\pFqskip\relax}%
        \dopFq
}
\def\dopFq#1#2#3#4#5{%
        {}_{#1}F_{#2}\kern-.1em\biggl(\kern-.1em\genfrac..{0pt}{}{#3}{#4}\biggl|\,#5\kern-.1em\biggr)%
        \endgroup
}
\newcommand{\llFrrr}[3]{\pFq{2}{3}{#1}{#2}{#3}}
\begin{document}

\title[]{The asymptotic zero distribution of Lommel polynomials as functions of the order with a variable complex argument}

\author{Petr Blaschke}
\address[Petr Blaschke]{Mathematical Institute in Opava, Silesian University in Opava, Na Rybn{\' i}{\v c}ku~626/1, 746 01 Opava, Czech Republic
	}
\email{Petr.Blaschke@math.slu.cz}

\author{Franti\v sek \v Stampach}
\address[Franti{\v s}ek {\v S}tampach]{
	Department of Applied Mathematics, Faculty of Information Technology, Czech Technical University in~Prague, 
	Th{\' a}kurova~9, 160~00 Praha, Czech Republic
	}	
\email{stampfra@fit.cvut.cz}

\subjclass[2010]{33C45, 26C10, 30C15, 30E15}

\keywords{Lommel polynomials, zeros, asymptotic zero distribution, variable parameters}

\date{}

\begin{abstract}
We study the asymptotic distribution of roots of Lommel polynomials as polynomials of the order with a variable and purely imaginary argument. The roots are complex and accumulate on certain curves in the complex plane. We prove existence of the weak limit of corresponding root-counting measures and deduce formulas for the supporting curves and density. The obtained result represents a solvable example of a more general problem which is still open. Numerical illustrations of the main result are also involved.
\end{abstract}

\maketitle

\section{Introduction and main results}

It is well known that the Lommel polynomials determined
by the recurrence
\begin{equation}
 R_{n+1,\nu}(x)=\frac{2(\nu+n)}{x}R_{n,\nu}(x)-R_{n-1,\nu}(x), \quad n\in\N_{0},
\label{eq:lommel_recur}
\end{equation}
with the standard initial setting $R_{-1,\nu}(x)=0$ and $R_{0,\nu}(x)=1$, for $\nu\in\C$ and $x\in\C\setminus\{0\}$, are intimately related to Bessel functions. First of all, Lommel polynomials arise explicitly in various formulas within the theory of Bessel functions, see, for example,~\cite[\S~ 9.6-9.73]{watson44} or~\cite[Chp.~VII]{erdelyi-etal81}. In addition, for $\nu>0$, $R_{n,\nu}$ are orthogonal polynomials with respect to a discrete measure supported on the set of zeros of the Bessel function of the first kind of order $\nu-1$ as explained, for instance, in~\cite{dic_54,dic-pol-wan_56}, see also \cite[Chp.~VI, \S~6]{chihara78}. Here we follow the traditional notation though, obviously, $R_{n,\nu}(x)$ is a polynomial in the variable $x^{-1}$ rather than in $x$. The variable $x$ is regarded as the argument of the $n$th Lommel polynomial while $\nu$ is referred to as the order.

Lommel polynomials can be also addressed as polynomials in the order~$\nu$. For $x\in\R\setminus\{0\}$ fixed, these polynomials are also orthogonal with respect to a measure supported this time on the set of zeros of the Bessel function of the first kind regarded as a function of the order. It was Dickinson who originally formulated the problem of constructing the measure of orthogonality for the Lommel polynomials in the variable~$\nu$ in~\cite{dic_58} which was solved by Maki in~\cite{mak_68} ten years later.

The aim of this paper is to investigate the asymptotic behavior of roots of Lommel polynomials as polynomials in the order~$\nu$, for $n\to\infty$ when the argument $x$ is also $n$-dependent and purely imaginary, namely $x=\ii n\alpha$ with $\alpha\in\R\setminus\{0\}$. Such a study belongs to the class of problems focused on the asymptotic properties of roots of orthogonal polynomials with varying non-standard parameters. Here ``non-standard parameters'' usually means that involved parameters are, for example, such that the studied polynomials are not orthogonal with respect to a \emph{positive} measure supported in~$\R$. As a result, roots of such polynomials are not necessarily real and can cluster in interesting subsets of the complex plane. Moreover, the methods for the asymptotic analysis of the polynomials with non-standard parameters and their zeros are more involved and some related general problems remain completely open (see Section~\ref{sec:appl}) in contrast to the classical theory~\cite{kui-van_99}. The number of publications devoted to these problems is growing considerably; we mention at least 
works~\cite{dia-men-ori_11, kui-mar-fin_04, kui-mcl_01, kui-mcl_04, mar-fin-etal_00, mar-fin-etal_01, mar-fin-ori_05,won-zha_06} on the asymptotic
behavior of Laguerre and Jacobi polynomials with variable non-standard parameters, a relevant research focused on hypergeometric polynomilas~\cite{aba-bog_16, aba-bog_18, dri-jor_03, dri-joh_07, dur-gui_01, sri-wan-zho_12} and other interesting polynomial families~\cite{boy-goh_07, boy-goh_08, boy-goh_10, son-won_17}. Asymptotic expansions of $R_{n,\nu}(Nx)$ for $N=n+\nu\to+\infty$ and its (real) zeros were obtained recently in~\cite{lee-wong_14}.

Typical strategies for the necessary asymptotic analysis used in the above mentioned works rely either on the saddle point method applied to a convenient integral representation of the polynomials or more advanced methods like the Riemann--Hilbert problem techniques. While the former approach is applicable when a simple generating function is known, see for example~\cite{boy-goh_10}, the latter approach can be applied if a non-Hermitian orthogonality relation is available~\cite{dea-huy-kui_10, kui-mar-fin_04, mar-fin-etal_01}. Another approach is the turning-point (or WKB) method which is applicable if the polynomials fulfill a suitable difference or differential equation, see~\cite{wan-won_16}. None of these strategies seems to be readily applicable in the case of Lommel polynomials in~$\nu$ with variable argument because, to our best knowledge, no suitable generating function, complex orthogonality, or difference/differential equation for the studied family of polynomials is known. A successful strategy applied in this article to deduce asymptotic behavior of the studied polynomials makes use of the close connection between Lommel polynomials and Bessel functions and take the advantage of known asymptotic expansions of Bessel functions in a complex setting which is mainly due to Olver~\cite{olver97}.

The explicit formula for the Lommel polynomials
\[
 R_{n,\nu}(x)=\sum_{k=1}^{\lfloor\frac{n}{2}\rfloor}(-1)^{k}\binom{n-k}{k}\left(\nu+k\right)_{n-2k}\left(\frac{2}{x}\right)^{n-2k},
\]
where $(a)_{n}=a(a+1)\dots(a+n-1)$ is the Pochhhamer symbol of $a\in\C$ and $\lfloor x\rfloor$ stands for the floor of $x\in\R$, can be rewritten in terms of the terminating hypergeometric series as
\begin{equation}
R_{n,\nu}(x)=(\nu)_{n}\left(\frac{2}{x}\right)^{n}\llFrrr{-n/2,-(n-1)/2}{-n,\nu,1-\nu-n}{-x^{2}}.
\label{eq:hypergeom_lommel}
\end{equation}
If follows easily from the hypergeometric representation that
\begin{equation}
 R_{n,\nu}(-x)=(-1)^{n}R_{n,\nu}(x)=R_{n,1-n-\nu}(x),
\label{eq:symm_lommel}
\end{equation}
for $n\in\N_{0}$, $\nu\in\C$, and $x\in\C\setminus\{0\}$. When the zeros of $R_{n,\nu}(x)$, with $x=\ii\alpha n$, are concerned, it turns out that the variable $\nu$ has to be appropriately scaled in order to keep the roots in a compact set as $n\to\infty$. Moreover, it will be advantageous to preserve symmetries which follows from~\eqref{eq:symm_lommel}. For these reasons, we introduce polynomials
\begin{equation}
 Q_{n}^{(\alpha)}(z):=R_{n,(1-n-nz)/2}(\ii\alpha n),
 \label{eq:def_Q_n}
\end{equation}
for $n\in\N_{0}$, $z\in\C$, and $\alpha\in\R\setminus\{0\}$. Then~\eqref{eq:symm_lommel} implies symmetry relations
\begin{equation}
 Q_{n}^{(\alpha)}(-z)=(-1)^{n}Q_{n}^{(\alpha)}(z) \quad\mbox{ and }\quad \overline{Q_{n}^{(\alpha)}(z)}=Q_{n}^{(\alpha)}(\overline{z}),
\label{eq:symm_Q_n}
\end{equation}
for $\alpha\in\R\setminus\{0\}$, where the bar denotes the complex conjugation. Consequently, the roots of $Q_{n}^{(\alpha)}$ are distributed symmetrically with respect to the real as well as the imaginary line for $\alpha\in\R$. In addition, since
\[
 Q_{n}^{(-\alpha)}(z)=(-1)^{n}Q_{n}^{(\alpha)}(z),
\]
we may restrict $\alpha$ to positive reals without loss of generality.

To any sequence of polynomials $P_{n}$, where the degree of $P_{n}$ equals $n$, one can associate the sequence of their root-counting measures, i.e., the probability measures
\begin{equation}
 \mu_{n}=\frac{1}{n}\sum_{k=1}^{n}\delta_{z_{k}^{(n)}},
\label{eq:root_count_meas}
\end{equation}
where $z_{1}^{(n)},\dots,z_{n}^{(n)}$ are roots of $P_{n}$ counted repeatedly according to their multiplicities and $\delta_{x}$ is the unit mass Dirac delta measure supported on the one-point set~$\{x\}$. We call the weak${}^{*}$ limit of~$\mu_{n}$, for $n\to\infty$, the \emph{asymptotic zero distribution} of the polynomial sequence $P_{n}$, provided that the limit exists. If supports of the sequence of root-counting measures remain in a compact subset of~$\C$, which is the case if $P_{n}=Q_{n}^{(\alpha)}$, the weak${}^{*}$ limit coincides with the distributional limit, i.e.,
\begin{equation}
 \lim_{n\to\infty}\int_{\C}f(z)\dd\mu_{n}(z)=\int_{\C}f(z)\dd\mu(z), \quad \forall f\in C_{0}^{\infty}(\C),
\label{eq:lim_distr_azd}
\end{equation}
for a probability measure $\mu$ supported in~$\C$. Our main result proves the existence of the asymptotic zero distribution of the sequence of polynomials $Q_{n}^{(\alpha)}$ and, in addition, provides an explicit description of its support and density.

Since the zeros of~$Q_{n}^{(\alpha)}$ are symmetrically distributed with respect to the axes we describe their asymptotic distribution in the first quadrant $\{z\in\C \mid \Re z\geq0, \Im z\geq0\}$ only. It turns out that the zeros accumulate in two arcs in the first quadrant depending on the value of $\alpha>0$. The only exception, when the zeros accumulate in a single arc, occurs when $\alpha=\alpha_{0}$, where $\alpha_{0}$ is the unique positive solution of the transcendental equation
\begin{equation}
 \sqrt{1+4\alpha^{2}}+\log\frac{2\alpha}{1+\sqrt{1+4\alpha^{2}}}=0.
\label{eq:def_alp_0}
\end{equation}
One readily checks that the left-hand side of~\eqref{eq:def_alp_0} is a strictly increasing function of $\alpha$ mapping~$(0,\infty)$ onto~$\R$. Therefore the solution~$\alpha_{0}$ exists and is unique. The numerical value of the threshold is approximately $\alpha_{0}\approx0.33137$. If not stated otherwise, complex functions such as the logarithm, the square root, etc., assume their principal branches. Our main result is as follows.

\begin{thm}\label{thm:main}
 For $\alpha>0$, the asymptotic zero distribution $\mu^{(\alpha)}$ of the sequence of polynomials $Q_{n}^{(\alpha)}$ exists and is symmetric with respect to the real and the imaginary line. Within the first quadrant, $\mu^{(\alpha)}$ decomposes as 
 \[
  \mu^{(\alpha)}\upharpoonleft_{\{\Re z\geq0, \Im z\geq0\}}=\mu_{1}^{(\alpha)}+\mu_{2}^{(\alpha)}.
 \]
The measure $\mu_{1}^{(\alpha)}$ is supported on a simple arc of a curve determined implicitly by the equation
\begin{equation}
 \Re\left[\frac{\ii\pi z}{4}+\sqrt{(1-z)^{2}+4\alpha^{2}}-\frac{1-z}{2}\log\left(\frac{1}{2\alpha}\left(1-z+\sqrt{(1-z)^{2}+4\alpha^{2}}\right)\right)\right]=0,
\label{eq:impl_supp_mu_1}
\end{equation}
for $z\in[0,1]+\ii[0,2\alpha]$. This arc connects a unique point $\xi(\alpha)\in[0,1)\cup\ii[0,2\alpha)$ with the point $1+2\ii\alpha$. The measure $\mu_{1}^{(\alpha)}$ is absolutely continuous on the arc and its density reads
\begin{equation}
 \frac{\dd\mu_{1}^{(\alpha)}}{\dd z}(z)=\frac{1}{4}+\frac{1}{2\pi\ii}\log\left(\frac{1}{2\alpha}\left(1-z+\sqrt{(1-z)^{2}+4\alpha^{2}}\right)\right),
 \label{eq:mu_1_dens}
\end{equation}
where $z$ transverses the supporting curve from the point~$\xi(\alpha)$ towards~$1+2\alpha\ii$. 
For the measure~$\mu_{2}^{(\alpha)}$, we have the following possibilities:
\begin{enumerate}[{\upshape i)}]
 \item If $\alpha<\alpha_{0}$, then $\xi(\alpha)\in(0,1)$, $\supp\mu^{(\alpha)}_{2}=[0,\xi(\alpha)]$, and
 \[
  \frac{\dd\mu_{2}^{(\alpha)}}{\dd x}(x)=\frac{1}{2}, \quad\mbox{ for }\; 0\leq x \leq\xi(\alpha).
 \]
 \item If $\alpha>\alpha_{0}$, then $\xi(\alpha)\in\ii(0,2\alpha)$, $\supp\mu^{(\alpha)}_{2}=[0,\Im\xi(\alpha)]$, and
 \[
  \frac{\dd\mu_{2}^{(\alpha)}}{\dd y}(y)=\frac{1}{\pi}\log\left(\frac{1}{2\alpha}\left|1+\ii y+\sqrt{(1+\ii y)^{2}+4\alpha^{2}}\right|\right), \quad\mbox{ for }\; 0\leq y \leq\Im\xi(\alpha).
 \]
 \item If $\alpha=\alpha_{0}$, then $\xi(\alpha)=0$ and $\mu^{(\alpha)}_{2}=0$.
\end{enumerate}
\end{thm}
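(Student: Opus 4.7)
The strategy is to reduce the problem to Olver's uniform large-order asymptotics for Bessel functions, via the classical representation of Lommel polynomials as a combination of Bessel function products (Watson \S9.6):
\[
R_{n,\nu}(x)=\frac{\pi x}{2\sin(\pi\nu)}\Bigl[J_{\nu-1}(x)\,J_{-\nu-n}(x)+(-1)^{n}J_{1-\nu}(x)\,J_{\nu+n}(x)\Bigr].
\]
Substituting $\nu=(1-n-nz)/2$ and $x=\ii\alpha n$ turns $Q_n^{(\alpha)}(z)$ into a combination of products of two Bessel functions whose orders grow like $n(1\pm z)/2$ and whose argument is $\ii\alpha n$; each factor is therefore of the form $J_{\mu}(\mu\zeta)$ with $\mu\propto n$ large and $\zeta=\pm 2\ii\alpha/(1\mp z)$ bounded, precisely the regime of Olver's uniform asymptotics.

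Feeding Olver's expansion into this identity produces, on an appropriate system of Stokes-decomposed domains, a two-term asymptotic representation
\[
Q_n^{(\alpha)}(z)=A_+(z)\,e^{n\phi_+(z)}\bigl(1+O(1/n)\bigr)+A_-(z)\,e^{n\phi_-(z)}\bigl(1+O(1/n)\bigr),
\]
where $\phi_\pm$ are explicit combinations of Olver's function $\eta(\zeta)=\sqrt{1-\zeta^{2}}+\log(\zeta/(1+\sqrt{1-\zeta^{2}}))$ at the two values of $\zeta$ above, plus the linear term $\ii\pi z/4$ inherited from $1/\sin(\pi\nu)$. A direct simplification reveals that $\phi_+(z)-\phi_-(z)$ equals twice the bracketed expression in~\eqref{eq:impl_supp_mu_1}, identifying the balance locus $|e^{n\phi_+}|=|e^{n\phi_-}|$ with that curve.

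From there a standard potential-theoretic argument closes the proof. Off the roots and off the balance curve one has
\[
\Phi^{(\alpha)}(z):=\lim_{n\to\infty}\tfrac{1}{n}\log|Q_n^{(\alpha)}(z)|=\max\bigl(\Re\phi_+(z),\Re\phi_-(z)\bigr),
\]
and a Vitali/subharmonic argument combined with the symmetries~\eqref{eq:symm_Q_n} lifts this to convergence in $L^{1}_{\mathrm{loc}}(\C)$, so that $\mu^{(\alpha)}=\tfrac{1}{2\pi}\Delta\Phi^{(\alpha)}$ in the distributional sense. Because $\Phi^{(\alpha)}$ is the maximum of the real parts of two analytic branches, its distributional Laplacian is carried by the set where the branches coincide --- this is the curve~\eqref{eq:impl_supp_mu_1} supporting~$\mu_1^{(\alpha)}$ --- together with the branch cuts of $\sqrt{(1-z)^{2}+4\alpha^{2}}$ and of the logarithm, which lie on the real and imaginary axes and give rise to~$\mu_2^{(\alpha)}$. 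The density~\eqref{eq:mu_1_dens} and the axis densities in cases~i)--ii) come out by computing the normal jumps of $\Re\phi_\pm$ across each arc; the clean constants $1/2$ and $1/\pi$ originate in the discontinuities of $\log$ and $\arg$.

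The principal technical obstacles I foresee are: (a) the branch-coherent gluing of Olver's asymptotics across Stokes lines so that $\phi_\pm$ extend as single-valued functions in sufficiently large sectors, together with matching across the turning points $z=1\pm 2\ii\alpha$; and (b) locating the endpoint $\xi(\alpha)$ of the arc and establishing the bifurcation at $\alpha=\alpha_{0}$. The latter reduces to a monotonicity study of $\Re F$, where $F$ is the bracket in~\eqref{eq:impl_supp_mu_1}, along $[0,1]\subset\R$ and $\ii[0,2\alpha]\subset\ii\R$; the transcendental threshold~\eqref{eq:def_alp_0} should emerge as the condition that the component of $\{\Re F=0\}$ terminating at $1+2\ii\alpha$ passes through the origin, forcing both axis pieces of $\supp\mu_2^{(\alpha)}$ to degenerate simultaneously.
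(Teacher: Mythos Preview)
Your overall architecture---Watson's Bessel identity, Olver's large-order asymptotics, then potential theory---is the same as the paper's, and your use of the logarithmic potential $\tfrac{1}{n}\log|Q_n^{(\alpha)}|$ and its Laplacian is equivalent to the paper's route through the Cauchy transform $\tfrac{(Q_n^{(\alpha)})'}{nQ_n^{(\alpha)}}$ and the Plemelj--Sokhotski jump formula. So at the strategic level there is no real disagreement.

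Two concrete gaps, however.

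\emph{First}, the ``two-term asymptotic representation'' you posit does not fall out of the Bessel identity directly. Passing to modified Bessel functions on the imaginary axis, the paper obtains \emph{three} competing exponential terms (an $IK$, a $KI$, and a $KK$ product), and eliminating one of them requires the nontrivial inequality $\Re\chi_\alpha(z)<\Re\chi_\alpha(-z)$ for $z$ in the open first-quadrant rectangle, proved via the maximum principle on the boundary of that rectangle. Your proposal does not identify this step; without it you cannot justify that only two phases compete.

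\emph{Second}, your account of the origin of $\mu_2^{(\alpha)}$ is wrong. The branch points of $\sqrt{(1-z)^2+4\alpha^2}$ lie at $1\pm 2\ii\alpha$, not on the coordinate axes, and neither $h_\alpha$ nor the logarithm in \eqref{eq:impl_supp_mu_1} has a cut along $[0,1]$ or $\ii[0,2\alpha]$ inside the first quadrant. The axis mass arises for a different reason: the dominant phase $\phi_+$ contains the term $\ii\pi z/4$ coming from $\sin(\pi\nu)\sim \tfrac{\ii}{2}e^{-\ii\pi n(1+z)/2}$ in the \emph{upper} half-plane, whereas in the lower half-plane the opposite exponential dominates and the sign flips to $-\ii\pi z/4$. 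Thus $\Phi^{(\alpha)}$ is built from \emph{different} analytic branches in adjacent quadrants (obtained via the symmetries $C(-z)=-C(z)$, $\overline{C(z)}=C(\bar z)$ in the paper), and the jump across $[0,\xi(\alpha)]$ is exactly the resulting $-\ii\pi$ discontinuity, yielding the constant density $1/2$. An analogous mechanism on the imaginary axis produces the logarithmic density in case~ii). If you try to read $\mu_2^{(\alpha)}$ off branch cuts of the square root or a single-valued logarithm, you will not recover these densities.
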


It is worth noticing that, if $\alpha<\alpha_{0}$, a portion of zeros of~$Q_{n}^{(\alpha)}$ is asymptotically uniformly distributed in a real interval, while the uniformity is not preserved on the imaginary line segment in the case $\alpha>\alpha_{0}$.

The paper is organized as follows. In Section~\ref{sec:prelim}, a general method and certain selected formulas for Lommel polynomials and Bessel functions are recalled as preliminaries. The proof of Theorem~\ref{thm:main} is worked out in Section~\ref{sec:main} in several steps. Numerical illustrations of Theorem~\ref{thm:main} are presented in Section~\ref{sec:numer}. Finally, in Section~\ref{sec:appl}, we show that Theorem~\ref{thm:main} can be viewed as a solvable model for a more general and still open problem concerning the asymptotic eigenvalue distribution of a sequence of non-self-adjoint sampling Jacobi matrices which served as a partial motivation for the current article.

\section{Preliminaries}\label{sec:prelim}

\subsection{The method}

The method for a derivation of the asymptotic zero distribution, which we will use, relies on finding a formula for a limiting Cauchy transform and inverting. Although this is a quite well known approach, we briefly describe its main steps using the theory of generalized functions following primarily the book~\cite{vladimirov84}. The reader may consult also~\cite{hormanderI,saff-totik97}.

Recall the Cauchy (or Stieltjes) transform of a Borel measure~$\mu$ is defined as
\[
 C_{\mu}(z):=\int_{\C}\frac{\dd\mu(\xi)}{z-\xi}, \quad z\in\C\setminus\supp\mu.
\]
In terms of the generalized functions, the Cauchy transform can be viewed as an element of the space of generalized functions~$\mathscr{D}'(\C)$ given by the convolution
\begin{equation}
 C_{\mu}(z)=\frac{1}{z}\ast\mu \;\mbox{ in } \mathscr{D}'(\C).
\label{eq:cauchy_generalized}
\end{equation}
Recall that $z^{-1}\in L_{loc}^{1}(\C)$ and hence the convolution kernel $z^{-1}$ is a regular generalized function. Moreover, we always assume that $\mu$ is compactly supported, in which case the existence of the convolution~\eqref{eq:cauchy_generalized} is guaranteed in the generalized sense. Recall also that $C_{\mu}(z)$ as a complex function of $z$ is analytic in $\C\setminus\supp\mu$. Conversely, if $C_{\mu}$ is analytic in a region $\Omega\subset\C$, then $\supp\mu\cap\Omega=\emptyset$.

Since $\pi^{-1}z^{-1}$ is a fundamental solution for the Cauchy--Riemann operator, i.e.,
\[
 \partial_{\overline{z}}\,\frac{1}{\pi z}=\delta(z)\;\mbox{ in } \mathscr{D}'(\C),
\]
where $\partial_{\bar{z}}=(\partial_{x}+\ii\partial_{y})/2$, for $z=x+\ii y$, the equation~\eqref{eq:cauchy_generalized} can be easily inverted getting
\begin{equation}
 \mu=\frac{1}{\pi}\partial_{\overline{z}}\,C_{\mu}(z) \;\mbox{ in } \mathscr{D}'(\C).
\label{eq:mu_cauchy_generalized}
\end{equation}
Suppose, in addition, that $C_{\mu}$ is an analytic function everywhere except several branch cuts occurring on a finite number of smooth closed arcs. Then, if $\gamma:(a,b)\to\C$ denotes a simple oriented open smooth curve on which $C_{\mu}$ has the branch cut then $\mu$ is absolutely continuous on the curve and its density reads
\begin{equation}
\frac{\dd\mu}{\dd x}(x)=-\frac{\gamma'(x)}{2\pi\ii}\left(C_{\mu}(\gamma(x)+)-C_{\mu}(\gamma(x)-)\right), \quad x\in(a,b),
\label{eq:plemelj-sokhotski}
\end{equation}
provided that the jump of $C_{\mu}$ on $\gamma$ is Lebesgue integrable. Here $C_{\mu}(\gamma(x)\pm)$ denote the non-tangential limits from the left/right side of $\gamma$ induced by the chosen orientation. The formula~\eqref{eq:plemelj-sokhotski} follows from the application of a formula for generalized derivatives, see~\cite[Sec.~6.5]{vladimirov84}, to~\eqref{eq:mu_cauchy_generalized}. If the brach cut of $C_{\mu}$ occurs on a real interval, the formula~\eqref{eq:plemelj-sokhotski} is known as the Plemelj--Sokhotski formula.

For a polynomial $P_{n}$ of degree $n$ with roots $z_{1}^{(n)},\dots,z_{n}^{(n)}$, 
the Cauchy transform of the root-counting measure~\eqref{eq:root_count_meas} reads
\begin{equation}
 C_{\mu_{n}}(z)=\frac{P'_{n}(z)}{nP_{n}(z)},
\label{eq:cauchy_root_meas}
\end{equation}
 for $z\in\C\setminus\{z_{1}^{(n)},\dots,z_{n}^{(n)}\}$. 
 Recall that we suppose the roots $z_{1}^{(n)},\dots,z_{n}^{(n)}$ remain in a fixed compact subset of~$\C$ for all $n\in\N$. The approach, which we will use, comprises the following general steps. First, we derive the leading term of the asymptotic expansion of $P_{n}$ for $n\to\infty$ which we use in~\eqref{eq:cauchy_root_meas} to find a limit of~$C_{\mu_{n}}(z)$,
 for $n\to\infty$, in the generalized sense. To this end, it is sufficient to show that
 there exists a limiting function, say $C$, such $C_{\mu_{n}}$ converges to $C$, as $n\to\infty$, point-wise almost everywhere in $\C$ (with respect to the 2D~Lebesgue measure).
 Using~\eqref{eq:mu_cauchy_generalized}, one concludes that $C=C_{\mu}$, where $\mu$ is the distributional limit of $\mu_{n}$ for $n\to\infty$. Finally, it turns out that the limiting Cauchy transform $C_{\mu}$ is analytic up to several branch cuts occurring on certain curves in~$\C$ and the inversion formula~\eqref{eq:plemelj-sokhotski} applies.

\subsection{Preliminary formulas for Lommel polynomials and Bessel functions}

Following the traditional notation, we denote by $J_{\nu}$, $I_{\nu}$, and $K_{\nu}$ the standard branches of the Bessel function of the first kind, the modified Bessel function of the first kind, and the modified Bessel function of the second kind, respectively. Recall that, if $\nu$ is not an integer, these functions are analytic in $\C\setminus(-\infty,0]$ with branch cuts~$(-\infty,0]$; see~\cite{watson44} or~\cite[Chp.~10]{dlmf} for more details.

First, we derive a formula for Lommel polynomials with purely complex argument in terms of the modified Bessel functions which is in a suitable form for a subsequent asymptotic analysis of polynomials~$Q_{n}^{(\alpha)}$ for $n\to\infty$.

\begin{lem}\label{lem:lommel_mod_bessel}
 For all $n\in\N_{0}$, $\nu\in\C$, $z\in\C\setminus(-\infty,0]$, one has
 \begin{align*}
  R_{n,\nu}(\ii z)=\ii^{n}z\bigg[I_{n+\nu}(z)K_{1-\nu}(z)+(-1)^{n}K_{n+\nu}(z)&I_{1-\nu}(z)\\
  &+(-1)^{n}\frac{2\sin(\pi\nu)}{\pi}K_{n+\nu}(z)K_{1-\nu}(z)\bigg].
 \end{align*}
\end{lem}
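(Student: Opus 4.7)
The plan is to build the identity starting from Bessel-function solutions of the very same three-term recurrence that defines $R_{n,\nu}$, determine the coefficients by the Wronskian, and then transport the result to purely imaginary argument by the rotation $J_\mu(\ii z)=e^{\ii\mu\pi/2}I_\mu(z)$.

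First I would verify that, for fixed $\nu\in\C$ and $x\in\C\setminus(-\infty,0]$, both sequences $u_n=J_{\nu+n}(x)$ and $v_n=(-1)^nJ_{-\nu-n}(x)$ satisfy
\[
f_{n+1}=\frac{2(\nu+n)}{x}f_n-f_{n-1},
\]
which is the recurrence~\eqref{eq:lommel_recur}. For $u_n$ this is the Bessel three-term recurrence $J_{\mu-1}+J_{\mu+1}=(2\mu/x)J_\mu$ with $\mu=\nu+n$; for $v_n$ the same identity with $\mu=-\nu-n$ produces a sign that is absorbed by the factor $(-1)^n$. Since $u_n,v_n$ are linearly independent (for $\nu\notin\Z$) I can write $R_{n,\nu}(x)=A\,J_{\nu+n}(x)+(-1)^nB\,J_{-\nu-n}(x)$ and pin the coefficients $A,B$ (functions of $\nu,x$) by the initial data $R_{0,\nu}=1$, $R_{-1,\nu}=0$.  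That linear system gives $A\,J_{\nu-1}(x)=B\,J_{1-\nu}(x)$ and $A\,J_\nu(x)+B\,J_{-\nu}(x)=1$, and is solved using the classical Wronskian-type identity $J_{1-\nu}(x)J_\nu(x)+J_{-\nu}(x)J_{\nu-1}(x)=2\sin(\pi\nu)/(\pi x)$. The outcome is the known representation
\[
R_{n,\nu}(x)=\frac{\pi x}{2\sin(\pi\nu)}\bigl[J_{1-\nu}(x)J_{n+\nu}(x)+(-1)^n J_{\nu-1}(x)J_{-n-\nu}(x)\bigr].
\]

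Next I substitute $x=\ii z$ with $z\in\C\setminus(-\infty,0]$, using the principal-branch relation $J_\mu(\ii z)=e^{\ii\mu\pi/2}I_\mu(z)$ term by term. Collecting the phases yields
\[
R_{n,\nu}(\ii z)=\frac{\pi\ii^n z}{2\sin(\pi\nu)}\bigl[I_{-(n+\nu)}(z)I_{-(1-\nu)}(z)-I_{n+\nu}(z)I_{1-\nu}(z)\bigr],
\]
after the factors $e^{\pm\ii\nu\pi/2}$ cancel and $\ii\cdot\ii^{n+1}=-\ii^n$. The remaining step is to convert the negative-index modified Bessel $I$'s to $K$'s via $I_{-\mu}(z)=I_\mu(z)+(2\sin\pi\mu/\pi)K_\mu(z)$, applied to $\mu=n+\nu$ and $\mu=1-\nu$. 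Expanding the product $I_{-(n+\nu)}I_{-(1-\nu)}$ produces four terms; the $I_{n+\nu}I_{1-\nu}$ piece cancels against the subtracted term, and the three survivors, after using $\sin\pi(n+\nu)=(-1)^n\sin\pi\nu$ and $\sin\pi(1-\nu)=\sin\pi\nu$, recombine with the prefactor $\pi\ii^n z/(2\sin\pi\nu)$ to give exactly the three summands in the claimed formula.

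Finally, since the derivation assumed $\nu\notin\Z$ (to use the Wronskian), I would note that both sides of the claimed identity are entire functions of $\nu\in\C$: the left-hand side is polynomial in $1/x$ with coefficients polynomial in $\nu$, while the right-hand side uses $I_\mu(z)$ and $K_\mu(z)$ which are analytic in $\mu$ (the apparent singularity of the last summand at integer $\nu$ is removable, since $\sin(\pi\nu)\to0$ compensates the poles inside the brackets). The formula therefore extends from $\nu\in\C\setminus\Z$ to all $\nu\in\C$ by continuity. The main bookkeeping obstacle is keeping the many $e^{\pm\ii\nu\pi/2}$ and $\ii^{\pm n}$ factors straight through the substitution $x\to\ii z$; once these are tracked and the $\sin(\pi(n+\nu))=(-1)^n\sin(\pi\nu)$ simplification is applied, the combinatorics of the three surviving terms is forced.
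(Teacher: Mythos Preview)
Your proposal is correct and follows essentially the same route as the paper's proof: start from Watson's representation of $R_{n,\nu}$ in terms of products of Bessel $J$'s, rotate the argument to pass to modified Bessel $I$'s, and then use the connection formula $I_{-\mu}=I_\mu+(2\sin\pi\mu/\pi)K_\mu$ to produce the three surviving terms. The only difference is that you rederive Watson's formula from the recurrence and initial data, whereas the paper simply cites it; your expansion of the product $I_{-(n+\nu)}I_{-(1-\nu)}$ into four terms is the same computation as the paper's application of the connection formula to $I_{-n-\nu}$ and $I_{\nu-1}$ (note $I_{\nu-1}=I_{-(1-\nu)}$). One minor quibble: in your final paragraph there is no ``apparent singularity'' to remove on the right-hand side, since $I_\mu(z)$ and $K_\mu(z)$ are entire in the order~$\mu$ and the only $\sin(\pi\nu)$ sits in a numerator; the extension to $\nu\in\Z$ is immediate.
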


\begin{proof}
We start with the well-known formula~\cite[\S~9.61, Eq.~(2)]{watson44}
\begin{equation}
 R_{n,\nu}(z)=\frac{\pi z}{2\sin(\pi\nu)}\left(J_{\nu+n}(z)J_{-\nu+1}(z)+(-1)^{n}J_{-\nu-n}(z)J_{\nu-1}(z)\right)
\label{eq:lommel_bes_J}
\end{equation}
which holds true for $n\in\N_{0}$, $z\in\C\setminus(-\infty,0]$, and $\nu\in\C\setminus\Z$. The validity of~\eqref{eq:lommel_bes_J} extends to integer values of $\nu$, too, by taking the respective limit on the right-hand side.
Then the identity~\cite[Eq.~10.27.6]{dlmf}
\[
 I_{\nu}(z)=e^{-\ii\pi\nu/2}J_{\nu}(\ii z)
\]
used in~\eqref{eq:lommel_bes_J} yields
\begin{equation}
 R_{n,\nu}(\ii z)=-\frac{\ii^{n}\pi z}{2\sin(\pi\nu)}\left(I_{\nu+n}(z)I_{-\nu+1}(z)-I_{-\nu-n}(z)I_{\nu-1}(z)\right).
\label{eq:lommel_bes_I_inproof}
\end{equation}
Now it suffices to apply the connection formula~\cite[Eq.~10.27.2]{dlmf} 
\[
 I_{-\nu}(z)=I_{\nu}(z)+\frac{2\sin(\pi\nu)}{\pi}K_{\nu}(z)
\]
to $I_{-n-\nu}(z)$ and $I_{\nu-1}(z)$ in~\eqref{eq:lommel_bes_I_inproof} which results in the identity from the statement.
\end{proof}

Second, we recall asymptotic expansions of the modified Bessel function for large argument and order.

\begin{lem}[Olver]\label{lem:olver}
 For $\nu\to\infty$ in the half-plane $\Re\nu>0$, we have
 \begin{align}
  I_{\nu}(\nu z)&=\frac{1}{\sqrt{2\pi\nu}}\frac{e^{\nu\zeta(z)}}{\left(1+z^{2}\right)^{1/4}}\left(1+O\left(\frac{1}{\nu}\right)\right),\label{eq:asympt_I1}\\
  K_{\nu}(\nu z)&=\sqrt{\frac{\pi}{2\nu}}\frac{e^{-\nu\zeta(z)}}{\left(1+z^{2}\right)^{1/4}}\left(1+O\left(\frac{1}{\nu}\right)\right),\label{eq:asympt_K1}\\
  I_{\nu+1}(\nu z)&=\frac{1}{\sqrt{2\pi\nu}}\frac{\sqrt{1+z^{2}}-1}{z\left(1+z^{2}\right)^{1/4}}\,e^{\nu\zeta(z)}\left(1+O\left(\frac{1}{\nu}\right)\right),\label{eq:asympt_I2}\\
  K_{\nu+1}(\nu z)&=\sqrt{\frac{\pi}{2\nu}}\frac{\sqrt{1+z^{2}}+1}{z\left(1+z^{2}\right)^{1/4}}\,e^{-\nu\zeta(z)}\left(1+O\left(\frac{1}{\nu}\right)\right),\label{eq:asympt_K2}
 \end{align}
 uniformly in the sector $|\arg z|<\pi/2-\epsilon$, for arbitrary $0<\epsilon<\pi/2$, where
 \begin{equation}
  \zeta(z):=\sqrt{1+z^{2}}+\log\frac{z}{1+\sqrt{1+z^{2}}}.
 \label{eq:def_zeta}
 \end{equation}
\end{lem}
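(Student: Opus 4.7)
The plan splits into two independent parts. The first two expansions~\eqref{eq:asympt_I1} and~\eqref{eq:asympt_K1} are the leading-order terms of Olver's classical uniform asymptotic expansions of the modified Bessel functions of large order; I would simply quote them from Olver's monograph~\cite{olver97} (Chapter~10, \S~7), or equivalently from~\cite[\S~10.41]{dlmf}, retaining only the zeroth coefficient of the asymptotic series and absorbing the rest into the $O(1/\nu)$ remainder. The uniformity domain stated here—any closed subsector of $|\arg z|<\pi/2$—is exactly the one in Olver's theorem, so nothing new has to be proved for these two.

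For the shifted expansions~\eqref{eq:asympt_I2} and~\eqref{eq:asympt_K2}, the natural strategy is to reduce them to the unshifted ones by applying the Olver expansion with order $\nu':=\nu+1$ and argument rewritten as $\nu z=\nu'w$, where $w:=\nu z/(\nu+1)=z-z/(\nu+1)$. For any closed subsector of $|\arg z|<\pi/2$ the point $w$ lies in the same subsector once $\nu$ is sufficiently large, so Olver's expansions apply to $I_{\nu'}(\nu'w)$ and $K_{\nu'}(\nu'w)$ and deliver the required leading-order behavior with a uniform $O(1/\nu)$ error.

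The remaining step is bookkeeping: expand each factor to leading order in $1/\nu$ and collect. A direct differentiation of~\eqref{eq:def_zeta} gives $\zeta'(z)=\sqrt{1+z^{2}}/z$, whence Taylor expansion yields
\[
 (\nu+1)\,\zeta(w)=\nu\,\zeta(z)+\zeta(z)-\sqrt{1+z^{2}}+O(1/\nu).
\]
Moreover $(1+w^{2})^{1/4}=(1+z^{2})^{1/4}\bigl(1+O(1/\nu)\bigr)$ and $1/\sqrt{\nu+1}=\bigl(1+O(1/\nu)\bigr)/\sqrt{\nu}$. The exponential prefactor simplifies by means of the elementary identity
\[
 e^{\zeta(z)-\sqrt{1+z^{2}}}=\frac{z}{1+\sqrt{1+z^{2}}}=\frac{\sqrt{1+z^{2}}-1}{z},
\]
read off directly from~\eqref{eq:def_zeta} after rationalization. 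Substituting these ingredients into Olver's leading term for $I_{\nu'}(\nu'w)$ reproduces~\eqref{eq:asympt_I2}; the same manipulation with the sign of the exponent reversed, which gives the reciprocal factor $(\sqrt{1+z^{2}}+1)/z$, reproduces~\eqref{eq:asympt_K2}.

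The only delicate point I foresee is ensuring that the $O(1/\nu)$ errors remain uniform in $z$ throughout the stated sector. This reduces to a routine Taylor remainder estimate: both $\zeta$ and $(1+\,\cdot\,)^{1/4}$ are analytic on any compact subset of $|\arg z|<\pi/2$, and the deformed argument $w=z\bigl(1-1/(\nu+1)\bigr)$ stays inside such a subset for large $\nu$, so a standard Cauchy bound on the remainders propagates the uniformity of Olver's expansion through the order shift. No analytic input beyond the quoted expansion is needed.
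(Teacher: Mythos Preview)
Your approach is correct but differs from the paper's. Both proofs quote Olver for~\eqref{eq:asympt_I1} and~\eqref{eq:asympt_K1}. For the shifted formulas~\eqref{eq:asympt_I2} and~\eqref{eq:asympt_K2}, the paper instead differentiates the quoted expansions with respect to~$z$ (uniformity permits this) to obtain leading-order asymptotics of $I_{\nu}'(\nu z)$ and $K_{\nu}'(\nu z)$, and then invokes the standard recurrence relations
\[
 I_{\nu+1}(\nu z)=I_{\nu}'(\nu z)-\frac{1}{z}I_{\nu}(\nu z),\qquad
 K_{\nu+1}(\nu z)=-K_{\nu}'(\nu z)+\frac{1}{z}K_{\nu}(\nu z),
\]
together with $\zeta'(z)=\sqrt{1+z^{2}}/z$. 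This yields the extra factors $(\sqrt{1+z^{2}}\mp1)/z$ directly, without any Taylor bookkeeping or passage through the deformed argument $w=\nu z/(\nu+1)$. Your route via the order shift $\nu\mapsto\nu+1$ and Taylor expansion of $\zeta$ at $w$ is equally valid and arrives at the same prefactors through the identity $e^{\zeta(z)-\sqrt{1+z^{2}}}=(\sqrt{1+z^{2}}-1)/z$; the paper's route is a bit shorter because the recurrence relations eliminate the need to track how the error terms behave under the perturbation $z\mapsto w$. One minor caveat: your uniformity argument appeals to compact subsets of the sector, whereas the lemma asserts uniformity on the full (unbounded) sector $|\arg z|<\pi/2-\epsilon$; the specific structure of $\zeta''$ does give the required bounds there, but ``standard Cauchy bound on a compact set'' is not quite the right justification. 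The paper's differentiation step is similarly terse on this point, and in any case only local uniformity is used downstream.
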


\begin{proof}
  Expansions~\eqref{eq:asympt_I1} and~\eqref{eq:asympt_K1} were proved by Olver in~\cite{olver54}. Due to the uniformity of these expansion, we can differentiate~\eqref{eq:asympt_I1} and~\eqref{eq:asympt_K1} with respect to~$z$ getting asymptotic formulas
  \begin{equation}
   I_{\nu}'(\nu z)=\frac{1}{\sqrt{2\pi\nu}}\frac{\zeta'(z)e^{\nu\zeta(z)}}{\left(1+z^{2}\right)^{1/4}}\left(1+O\left(\frac{1}{\nu}\right)\right)
  \label{eq:asympt_I1der}
  \end{equation}
  and
  \begin{equation}
   K_{\nu}'(\nu z)=-\sqrt{\frac{\pi}{2\nu}}\frac{\zeta'(z)e^{-\nu\zeta(z)}}{\left(1+z^{2}\right)^{1/4}}\left(1+O\left(\frac{1}{\nu}\right)\right),
  \label{eq:asympt_K1der}
  \end{equation}
  for $\nu\to\infty$ in the half-plane $\Re\nu>0$. 
  Noticing that
  \begin{equation}
   \zeta'(z)=\frac{\sqrt{1+z^{2}}}{z}
   \label{eq:zeta_dif}
  \end{equation}
  and using the identities~\cite[Eq.~10.29.2]{dlmf}
  \[
   I_{\nu+1}(\nu z)=I_{\nu}'(\nu z)-\frac{1}{z}I_{\nu}(\nu z)
  \quad \mbox{  and }\quad 
   K_{\nu+1}(\nu z)=-K_{\nu}'(\nu z)+\frac{1}{z}K_{\nu}(\nu z)
  \]
  together with the expansions \eqref{eq:asympt_I1},\eqref{eq:asympt_I1der} and \eqref{eq:asympt_K1},\eqref{eq:asympt_K1der}, we arrive at the asymptotic formulas~\eqref{eq:asympt_I2} and~\eqref{eq:asympt_K2}.
\end{proof}

\section{Proof of the main result}\label{sec:main}

The aim of this section is to prove Theorem~\ref{thm:main}. This is done in several steps.

\subsection{A rough localization of roots}

First, we determine a compact subset of~$\C$ where all roots of $Q_{n}^{(\alpha)}$ are located for all $n\in\N$. Such a localization allows us to restrict the forthcoming analysis to the compact subset which simplifies the overall derivation of the asymptotic zero distribution.

\begin{lem}\label{lem:local}
 Let $\alpha>0$ and $n\in\N$. If $Q_{n}^{(\alpha)}(z_{0})=0$, then 
 \[
  |\Re z_{0}|\leq 1-\frac{1}{n} \quad\mbox{ and }\quad |\Im z_{0}|\leq 2\alpha\cos\left(\frac{\pi}{n+1}\right).
 \]
 Consequently, all zeros are located in a rectangular domain:
 \[
 \bigcup_{n\in\N}\left\{z\in\C \;\Big|\; Q_{n}^{(\alpha)}(z)=0\right\}\subset(-1,1)+2\alpha\ii(-1,1).
 \]
\end{lem}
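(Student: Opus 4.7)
The plan is to realise, for fixed $x$, the Lommel polynomial $R_{n,\nu}(x)$ as a scalar multiple of the characteristic polynomial in $\nu$ of an explicit tridiagonal matrix and then to apply the classical Bendixson (numerical range) inequalities together with the well-known spectrum of the discrete Dirichlet Laplacian on a path.

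The first step is to rewrite the recurrence~\eqref{eq:lommel_recur} in the equivalent form
\[
\nu R_{k,\nu}(x)=\frac{x}{2}R_{k+1,\nu}(x)-k\,R_{k,\nu}(x)+\frac{x}{2}R_{k-1,\nu}(x),\qquad k\in\N_{0},
\]
so that, if $R_{n,\nu_{0}}(x)=0$, the vector $(R_{0,\nu_{0}}(x),\ldots,R_{n-1,\nu_{0}}(x))^{T}$ becomes a nontrivial eigenvector, with eigenvalue $\nu_{0}$, of the $n\times n$ tridiagonal matrix
\[
J_{n}^{(x)}:=\begin{pmatrix} 0 & x/2 & & \\ x/2 & -1 & x/2 & \\ & \ddots & \ddots & \ddots \\ & & x/2 & -(n-1)\end{pmatrix}.
\]
A short induction comparing three-term recurrences (those for $R_{n,\nu}(x)$ and for $\det(\nu I-J_{n}^{(x)})$) in fact yields the identity $R_{n,\nu}(x)=(2/x)^{n}\det(\nu I-J_{n}^{(x)})$, so that the zero set of $R_{n,\nu}(x)$ in $\nu$ coincides exactly with the spectrum of $J_{n}^{(x)}$.

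Specialising to $x=\ii\alpha n$ with $\alpha>0$ gives the Cartesian decomposition
\[
J_{n}^{(\ii\alpha n)}=H+\ii K,\qquad H:=\diag(0,-1,\ldots,-(n-1)),\qquad K:=\frac{\alpha n}{2}\,S,
\]
where $S$ is the real symmetric $n\times n$ matrix with zero diagonal and ones on the two off-diagonals. Since $H$ and $K$ are Hermitian, writing $\nu_{0}=\langle J_{n}^{(\ii\alpha n)}v,v\rangle$ for a unit eigenvector $v$ gives
\[
\Re\nu_{0}=\langle Hv,v\rangle\in[-(n-1),0],\qquad |\Im\nu_{0}|=|\langle Kv,v\rangle|\leq\frac{\alpha n}{2}\,\rho(S),
\]
with $\rho(S)$ the spectral radius. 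Using the classical spectrum $\{2\cos(k\pi/(n+1))\mid k=1,\ldots,n\}$ of $S$ one gets $\rho(S)=2\cos(\pi/(n+1))$, hence $|\Im\nu_{0}|\leq\alpha n\cos(\pi/(n+1))$.

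Finally, by the definition~\eqref{eq:def_Q_n}, $Q_{n}^{(\alpha)}(z_{0})=0$ is equivalent to $\nu_{0}:=(1-n-nz_{0})/2$ being a zero of $R_{n,\nu}(\ii\alpha n)$. The elementary relations $\Re z_{0}=(1-n-2\Re\nu_{0})/n$ and $\Im z_{0}=-2\Im\nu_{0}/n$ immediately translate the $\nu_{0}$-bounds into $|\Re z_{0}|\leq 1-1/n$ and $|\Im z_{0}|\leq 2\alpha\cos(\pi/(n+1))$, which are the claimed inequalities. The only non-routine ingredient in this plan is the recurrence-matching identification of $R_{n,\nu}(x)$ with the characteristic polynomial of $J_{n}^{(x)}$; the remainder consists of standard linear algebra together with the explicit eigenvalues of $S$.
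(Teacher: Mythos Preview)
Your proof is correct and follows essentially the same approach as the paper. Both arguments identify the zeros with the spectrum of an explicit complex tridiagonal matrix (your $J_{n}^{(\ii\alpha n)}$ is, up to the affine change $z=(1-n-2\nu)/n$ and a harmless diagonal similarity, the paper's $J_{n}$) and then apply the Bendixson-type bounds $\Re z_{0}=\langle(\Re J_{n})\phi,\phi\rangle$, $\Im z_{0}=\langle(\Im J_{n})\phi,\phi\rangle$ together with the explicit spectrum $\{2\alpha\cos(k\pi/(n+1))\}$ of the free Jacobi matrix on a path; the only cosmetic difference is that you work in the $\nu$-variable first and translate at the end, whereas the paper writes the matrix directly in~$z$.
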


\begin{proof}
 First, with the aid of the recurrence~\eqref{eq:lommel_recur} and definition~\eqref{eq:def_Q_n}, one readily verifies that
 \[
  (\ii\alpha)^{n}Q_{n}^{(\alpha)}(z)=\det(J_{n}-z),
 \]
 where $J_{n}\in\C^{n,n}$ is a tridiagonal matrix with entries
 \[
  \left(J_{n}\right)_{k,k}=-1+\frac{2k-1}{n}, \quad k=1,\dots,n,
 \]
 and
 \[
  \left(J_{n}\right)_{k,k+1}=\left(J_{n}\right)_{k+1,k}=\ii\alpha, \quad k=1,\dots,n-1.
 \]
 Hence roots of $Q_{n}^{(\alpha)}$ coincide with eigenvalues of $J_{n}$.
 
 Let $Q_{n}^{(\alpha)}(z_{0})=0$. Denote by $\phi\in\C^{n}$ a normalized eigenvector of $J_{n}$ corresponding to the eigenvalue~$z_{0}$. Noticing that $\Re J_{n}=(J_{n}+J_{n}^{*})/2$ is the diagonal matrix with the same diagonal as $J_{n}$, one obtains
 \[
  |\Re z_{0}|=|\langle\phi,(\Re J_{n})\phi\rangle|\leq\sum_{k=1}^{n}\frac{|n-2k+1|}{n}|\phi_{k}|^{2}\leq 1-\frac{1}{n}.
 \]
 
 Similarly, one shows that $|\Im z_{0}|$ is majorized by the largest (in modulus) eigenvalue of the Hermitian matrix $\Im J_{n}=(J_{n}-J_{n}^{*})/2\ii$. It is a matter of simple linear algebra to show that the eigenvalues of $\Im J_{n}$ are
 \[
  2\alpha\cos\left(\frac{\pi k}{n+1}\right), \quad k=1,\dots,n.
 \]
 Consequently, $|\Im z_{0}|\leq 2\alpha\cos(\pi/(n+1))$.
\end{proof}

\subsection{Asymptotic behavior of $Q_{n}^{(\alpha)}$}

Due to symmetries~\eqref{eq:symm_Q_n} and Lemma~\ref{lem:local} the analysis of the zeros of $Q_{n}^{(\alpha)}$ can be restricted to a rectangular domain of the first quadrant
which we denote by
\begin{equation}
 \Omega^{(\alpha)}:=(0,1)+2\ii\alpha(0,1),
\end{equation}
for later purposes. Further, we will use the auxiliary function
\begin{equation}
 \chi_{\alpha}(z):=\frac{1+z}{2}\zeta\left(\frac{2\alpha}{1+z}\right)=\frac{1}{2}\sqrt{4\alpha^{2}+(1+z)^{2}}+\frac{1+z}{2}\log\frac{2\alpha}{1+z+\sqrt{4\alpha^{2}+(1+z)^{2}}},
\label{eq:def_chi}
\end{equation}
where $\zeta$ is as in~\eqref{eq:def_zeta} and $\alpha>0$. By differentiating~\eqref{eq:def_chi} with respect to~$z$, one obtains the formula
 \begin{equation}
  \chi_{\alpha}'(z)=\frac{1}{2}\zeta\left(\frac{2\alpha}{1+z}\right)-\frac{\alpha}{1+z}\zeta'\left(\frac{2\alpha}{1+z}\right)=\frac{1}{2}\log\frac{2\alpha}{1+z+\sqrt{(1+z)^{2}+4\alpha^{2}}}
 \label{eq:chi_dif}
 \end{equation}
which will be also used multiple times below. Next, we will need the following auxiliary inequality.

\begin{lem}\label{lem:non-posit}
 For all $\alpha>0$ and $z\in\Omega^{(\alpha)}$, one has
  $\Re \chi_{\alpha}(z)<\Re \chi_{\alpha}(-z)$.
\end{lem}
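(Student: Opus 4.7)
The plan is to define $f(z):=\Re[\chi_{\alpha}(-z)-\chi_{\alpha}(z)]$ and establish two facts: (i)~$f\equiv 0$ on the left edge $\{\ii y:y\in[0,2\alpha]\}$ of the rectangle, and (ii)~$\partial_{x}f>0$ throughout $\Omega^{(\alpha)}$. Combining them by the fundamental theorem of calculus will give
\[
 f(x+\ii y)=f(\ii y)+\int_{0}^{x}\partial_{x}f(\xi+\ii y)\dd\xi>0
\]
for every $z=x+\ii y\in\Omega^{(\alpha)}$, which is the claim. Before starting I would check that $f$ is continuous on $\overline{\Omega^{(\alpha)}}$ and harmonic in the interior: for $z$ in a neighborhood of the closed rectangle, both $\chi_{\alpha}(\pm z)$ are holomorphic because the radicands $(1\pm z)^{2}+4\alpha^{2}$ and the resulting arguments of the principal logarithm in~\eqref{eq:def_chi} never land on the cut $(-\infty,0]$.

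For step~(i) I would invoke Schwarz reflection. Under principal branches $\chi_{\alpha}$ is real-valued on the ray $(-1,\infty)$, so $\chi_{\alpha}(\overline{w})=\overline{\chi_{\alpha}(w)}$ throughout a symmetric domain of analyticity. Applied at $w=\ii y$ this yields $\chi_{\alpha}(-\ii y)=\overline{\chi_{\alpha}(\ii y)}$, hence $f(\ii y)=0$.

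For step~(ii), abbreviating $A(w):=1+w+\sqrt{(1+w)^{2}+4\alpha^{2}}$ turns~\eqref{eq:chi_dif} into $\chi_{\alpha}'(w)=-\tfrac{1}{2}\log(A(w)/(2\alpha))$, and a direct differentiation gives
\[
 \partial_{x}f(z)=\Re\bigl[-\chi_{\alpha}'(-z)-\chi_{\alpha}'(z)\bigr]=\tfrac{1}{2}\log\frac{|A(z)A(-z)|}{4\alpha^{2}}.
\]
The key identity I plan to exploit is
\[
 \frac{A(w)}{2\alpha}=\exp\sinh^{-1}\!\Bigl(\frac{1+w}{2\alpha}\Bigr),
\]
which follows from $\sinh s+\cosh s=e^{s}$ on taking $\sinh s=(1+w)/(2\alpha)$ and $\cosh s=\sqrt{(1+w)^{2}+4\alpha^{2}}/(2\alpha)$. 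Substituting rewrites the derivative as
\[
 \partial_{x}f(z)=\tfrac{1}{2}\Bigl[\Re\sinh^{-1}\!\tfrac{1+z}{2\alpha}+\Re\sinh^{-1}\!\tfrac{1-z}{2\alpha}\Bigr].
\]
For $z\in\Omega^{(\alpha)}$ the real parts of $(1\pm z)/(2\alpha)$ are both strictly positive, and the principal branch of $\sinh^{-1}$ biholomorphically maps the right half-plane onto the right half of the strip $|\Im s|<\pi/2$; hence $\Re\sinh^{-1}(\zeta)>0$ whenever $\Re\zeta>0$, so both summands are strictly positive and $\partial_{x}f>0$.

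The main obstacle I anticipate is the branch-cut bookkeeping needed to justify, throughout the closed rectangle $\overline{\Omega^{(\alpha)}}$, the simultaneous identification $\log(A(w)/(2\alpha))=\sinh^{-1}((1+w)/(2\alpha))$ for both $w=z$ and $w=-z$: one has to verify that the principal square roots, logarithms, and inverse hyperbolic sines are evaluated strictly away from their cuts. Once this is settled, the positivity of each summand is immediate and the horizontal integration from the imaginary axis closes the lemma.
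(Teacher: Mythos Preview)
Your proof is correct and takes a genuinely different route from the paper's. The paper defines the same function (with the opposite sign), observes it is harmonic on $\Omega^{(\alpha)}$, and invokes the Maximum Modulus Principle, which then forces a separate monotonicity/sign analysis on each of the four sides of the rectangle (using \eqref{eq:chi_dif} on the real side, the second derivative $\chi_{\alpha}''$ on the top side, and an analogous argument on the right side). You bypass the maximum principle entirely: after matching the paper on the imaginary segment via Schwarz reflection, you prove the single inequality $\partial_{x}f>0$ throughout $\Omega^{(\alpha)}$ by recognizing $-\chi_{\alpha}'(w)=\tfrac{1}{2}\sinh^{-1}\!\bigl((1+w)/(2\alpha)\bigr)$ and using that the principal $\sinh^{-1}$ sends the right half-plane into the right half-strip; the fundamental theorem of calculus along horizontal segments then finishes. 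Your argument is more elementary (no harmonic-function theory) and more economical (one side plus one global derivative sign instead of four boundary checks); the paper's approach, on the other hand, never needs the $\sinh^{-1}$ identification and works entirely with \eqref{eq:chi_dif} and its derivative. The branch-cut verification you flag as the main obstacle is routine here: for $z\in\Omega^{(\alpha)}$ both $(1\pm z)/(2\alpha)$ lie strictly in the right half-plane, away from the cuts of $\sinh^{-1}$, and the radicands $(1\pm z)^{2}+4\alpha^{2}$ have nonzero imaginary part, so all principal branches agree.
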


\begin{proof}
 For $\alpha>0$ fixed, we temporarily denote $f(z):=\Re\left(\chi_{\alpha}(z)-\chi_{\alpha}(-z)\right)$. Since $\chi_{\alpha}$ is analytic in $\Omega^{(\alpha)}$ as well as in $-\Omega^{(\alpha)}$, $f$ is harmonic in~$\Omega^{(\alpha)}$ and continuous to the boundary of~$\Omega^{(\alpha)}$. Since non-constant, a maximum of $f$ is attained on the boundary of~$\Omega^{(\alpha)}$ according to the Maximum Modulus Principle for harmonic functions. Hence it suffices to verify that~$f$ is non-positive on the boundary of~$\Omega^{(\alpha)}$.
 
 Note that the function~$\zeta$ given by~\eqref{eq:zeta_dif} fulfills $\overline{\zeta(z)}=\zeta(\overline{z})$. Therefore it follows immediately from~\eqref{eq:def_chi} that 
 \[
 f(\ii y)=\Re\left(\chi_{\alpha}(\ii y)-\overline{\chi_{\alpha}(\ii y)}\right)=0,
 \]
 for $y\in[0,2\alpha]$. 
 
 Using~\eqref{eq:chi_dif}, it is elementary to show that $\chi_{\alpha}'(x)<0$ for all $x\in(-1,1)$. Thus
 \[
  f'(x)=\chi_{\alpha}'(x)+\chi_{\alpha}'(-x)<0,
 \]
 for $x\in(0,1)$ which means that $f$ is strictly decreasing in $(0,1)$. Taking into account that $f(0)=0$, one concludes that $f(x)<0$ for~$x\in(0,1]$.
 
 Next, one can differentiate~\eqref{eq:chi_dif} once more getting the simple expression
 \[
  \chi_{\alpha}''(z)=-\frac{1}{2\sqrt{(1+z)^{2}+4\alpha^{2}}}.
 \]
 Consequently, $\Re\chi_{\alpha}''(z)<0$ for any $z\in\C$ which is not of the form $z=-1+\ii y$ with $|y|\geq2\alpha$. Noticing additionally that $\Re\chi_{\alpha}'(-1+2\alpha\ii)=0$, one concludes that $\Re\chi_{\alpha}'(x+2\alpha\ii)<0$ for $x\in(-1,1)$ and hence $f'(x+2\alpha\ii)<0$ for $x\in(0,1)$. Since $f(2\alpha\ii)=0$, we see that $f(x+2\alpha\ii)<0$ for $x\in(0,1]$.
 
 A completely analogous reasoning as in the previous step shows that $f(1+\ii y)$ is a decreasing function of $y\in(0,2\alpha)$. It already follows from the previous analysis that $f(1)<0$
 and so $f(1+\ii y)<0$ for $y\in[0,1]$. In total, the function $f$ assumes only non-positive values on the boundary of~$\Omega^{(\alpha)}$ which completes the proof.
\end{proof}

As a next step, we investigate an asymptotic behavior of the auxiliary polynomials
\begin{equation}
 q_{n}^{(\alpha)}(z):=\frac{\ii^{n}}{\alpha n}\,Q_{n}^{(\alpha)}\left(z-\frac{1}{n}\right).
\label{eq:def_q_n}
\end{equation}
The shift of the argument in~\eqref{eq:def_q_n} allows a straightforward application of the asymptotic expansions from Lemma~\ref{lem:olver}.

\begin{lem}\label{lem:q_n_asympt}
 For $\alpha>0$, one has
 \[
  q_{n}^{(\alpha)}(z)=B_{n}^{(\alpha)}(z)+C_{n}^{(\alpha)}(z),
 \]
 where
 \begin{equation}
  B_{n}^{(\alpha)}(z)=\frac{(-1)^{n}}{n}g^{(\alpha)}_{-}(z)e^{-n\left(\chi_{\alpha}(z)-\chi_{\alpha}(-z)\right)}\left(1+O\left(\frac{1}{n}\right)\right), \quad n\to\infty,
  \label{eq:B_n_asympt}
 \end{equation}
 \begin{equation}
  C_{n}^{(\alpha)}(z)=\frac{(-1)^{n}\ii^{n+1}}{n}g^{(\alpha)}_{+}(z)e^{-n\left(\chi_{\alpha}(z)+\chi_{\alpha}(-z)+\ii\pi z/2\right)}\left(1+O\left(\frac{1}{n}\right)\right), \quad n\to\infty,
  \label{eq:C_n_asympt}
 \end{equation}
 and
 \begin{equation}
  g^{(\alpha)}_{\pm}(z):=\frac{1}{2\alpha}\frac{\sqrt{(1-z)^{2}+4\alpha^{2}}\pm(1-z)}{\left((1-z)^{2}+4\alpha^{2}\right)^{1/4}\left((1+z)^{2}+4\alpha^{2}\right)^{1/4}}.
 \label{eq:def_g}
 \end{equation}
 In addition, the asymptotic formulas~\eqref{eq:B_n_asympt} and~\eqref{eq:C_n_asympt} are locally uniform in $z\in\Omega^{(\alpha)}$.
\end{lem}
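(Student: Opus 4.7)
My strategy is to substitute \eqref{eq:def_q_n} into Lemma~\ref{lem:lommel_mod_bessel} and then apply Olver's uniform expansions from Lemma~\ref{lem:olver}. Combining \eqref{eq:def_Q_n} with \eqref{eq:def_q_n} one gets
\[
q_n^{(\alpha)}(z)=\frac{\ii^n}{\alpha n}R_{n,\nu}(\ii\alpha n),\qquad\nu:=1-\tfrac{n(1+z)}{2},
\]
so the two orders entering Lemma~\ref{lem:lommel_mod_bessel} are $n+\nu=1+\tfrac{n(1-z)}{2}=:M+1$ and $1-\nu=\tfrac{n(1+z)}{2}=:M'$. Setting $w:=2\alpha/(1-z)$ and $w':=2\alpha/(1+z)$, the argument $\alpha n$ rewrites as $Mw=M'w'$, putting every Bessel function into the Olver form $I_{\mu}(\mu\,\cdot)$, $K_{\mu}(\mu\,\cdot)$. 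On any compact subset of $\Omega^{(\alpha)}$ the numbers $1\pm z$, and hence $M,M',w,w'$, lie in the open right half-plane, so Lemma~\ref{lem:olver} applies \emph{uniformly}, and \eqref{eq:def_chi} yields the key simplifications $M\zeta(w)=n\chi_\alpha(-z)$ and $M'\zeta(w')=n\chi_\alpha(z)$.

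Using $\ii^{2n}=(-1)^n$, Lemma~\ref{lem:lommel_mod_bessel} supplies the three-term decomposition
\[
q_n^{(\alpha)}(z)=(-1)^nT_1+T_2+\tfrac{2\sin\pi\nu}{\pi}T_3,
\]
with $T_1=I_{M+1}(Mw)K_{M'}(M'w')$, $T_2=K_{M+1}(Mw)I_{M'}(M'w')$ and $T_3=K_{M+1}(Mw)K_{M'}(M'w')$. Substituting \eqref{eq:asympt_I1}--\eqref{eq:asympt_K2} and simplifying via $1+w^2=((1-z)^2+4\alpha^2)/(1-z)^2$ and the analogous identity for $w'$, I expect
\[
T_1\sim\tfrac{1}{n}g_-^{(\alpha)}(z)\,e^{-n(\chi_\alpha(z)-\chi_\alpha(-z))},\quad T_3\sim\tfrac{\pi}{n}g_+^{(\alpha)}(z)\,e^{-n(\chi_\alpha(z)+\chi_\alpha(-z))},
\]
together with $T_2\sim\tfrac{1}{n}g_+^{(\alpha)}(z)\,e^{+n(\chi_\alpha(z)-\chi_\alpha(-z))}$, with $g_\pm^{(\alpha)}$ as in \eqref{eq:def_g}. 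The first of these identifies $(-1)^nT_1=B_n^{(\alpha)}(z)(1+O(1/n))$. Since Lemma~\ref{lem:non-posit} guarantees $\Re(\chi_\alpha(z)-\chi_\alpha(-z))<0$ uniformly on compacts of $\Omega^{(\alpha)}$, the term $T_2$ is exponentially smaller than $B_n^{(\alpha)}$ and is harmlessly absorbed into its $O(1/n)$ error.

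It remains to extract $C_n^{(\alpha)}$ from $\tfrac{2\sin\pi\nu}{\pi}T_3$. Writing $\sin\pi\nu=\sin(\pi n(1+z)/2)=(e^{\ii\pi n(1+z)/2}-e^{-\ii\pi n(1+z)/2})/(2\ii)$ and recalling that $\Im z\geq\delta>0$ on any compact $K\subset\Omega^{(\alpha)}$, the $e^{\ii\pi n(1+z)/2}$ half is exponentially negligible, so
\[
\tfrac{2\sin\pi\nu}{\pi}T_3\sim\tfrac{\ii\,e^{-\ii\pi n/2}}{n}g_+^{(\alpha)}(z)\,e^{-n(\chi_\alpha(z)+\chi_\alpha(-z)+\ii\pi z/2)},
\]
and the identity $\ii\,e^{-\ii\pi n/2}=(-1)^n\ii^{n+1}$ (immediate from reduction modulo~$4$) matches exactly the coefficient in $C_n^{(\alpha)}$. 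The main obstacle I anticipate is pure bookkeeping: reducing the products of Olver prefactors involving $\sqrt{1+w^2}\pm 1$, $(1+w^2)^{1/4}$, $w^{-1}$ and their $w'$-counterparts into the symmetric form~\eqref{eq:def_g}, while carefully tracking the principal branches of the various square roots and logarithms---these are well defined throughout $\Omega^{(\alpha)}$ precisely because $1\pm z$, $w$, $w'$ never meet the cut $(-\infty,0]$. Local uniformity is automatic since every estimate used is uniform on compacts of $\Omega^{(\alpha)}$.
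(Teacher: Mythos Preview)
Your proposal is correct and follows essentially the same route as the paper. The paper likewise writes $q_n^{(\alpha)}=A_n^{(\alpha)}+B_n^{(\alpha)}+C_n^{(\alpha)}$ via Lemma~\ref{lem:lommel_mod_bessel}, with your $(-1)^nT_1,\,T_2,\,\tfrac{2\sin\pi\nu}{\pi}T_3$ corresponding to the paper's $B_n^{(\alpha)},\,A_n^{(\alpha)},\,C_n^{(\alpha)}$ respectively; it then applies Lemma~\ref{lem:olver}, invokes Lemma~\ref{lem:non-posit} to discard $A_n^{(\alpha)}$ as exponentially subdominant, and replaces $\sin(\pi n(1+z)/2)$ by $\tfrac{\ii}{2}e^{-\ii\pi n(1+z)/2}$ exactly as you do.
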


\begin{proof}
 It follows from~\eqref{eq:def_Q_n},\eqref{eq:def_q_n} and Lemma~\ref{lem:lommel_mod_bessel} that $q_{n}^{(\alpha)}$ can be written as
 \[
  q_{n}^{(\alpha)}(z)=A_{n}^{(\alpha)}(z)+B_{n}^{(\alpha)}(z)+C_{n}^{(\alpha)}(z),
 \]
 for $n\in\N$ and $z\in\Omega^{(\alpha)}$, where
 \[
  A_{n}^{(\alpha)}(z):=I_{n(1+z)/2}(\alpha n)K_{1+n(1-z)/2}(\alpha n),
 \]
 \[
  B_{n}^{(\alpha)}(z):=(-1)^{n}K_{n(1+z)/2}(\alpha n)I_{1+n(1-z)/2}(\alpha n),
 \]
 and
 \[
  C_{n}^{(\alpha)}(z):=\frac{2}{\pi}\sin\left(\frac{\pi n}{2}(1+z)\right)K_{n(1+z)/2}(\alpha n)K_{1+n(1-z)/2}(\alpha n).
 \]

 Note that, if $z\in\Omega^{(\alpha)}$, then
 \[
  \Re \frac{n}{2}(1\pm z)>0 \quad\mbox{ and }\quad \Re\frac{2\alpha}{1\pm z}>0.
 \]
 Therefore Lemma~\ref{lem:olver} can be applied to deduce asymptotic formulas for $A_{n}^{(\alpha)}(z)$, $B_{n}^{(\alpha)}(z)$, and $C_{n}^{(\alpha)}(z)$, for $z\in\Omega^{(\alpha)}$ as $n\to\infty$. In the case of $C_{n}^{(\alpha)}(z)$, we also take into account that
 \[
  \sin\left(\frac{\pi n}{2}(1+z)\right)=\frac{\ii}{2}e^{-\ii\pi n(1+z)/2}\left(1+O\left(\frac{1}{n}\right)\right), \quad n\to\infty,
 \]
 locally uniformly in the half-plane $\Im z>0$. The resulting formulas are
 \[
  A_{n}^{(\alpha)}(z)=\frac{1}{n}g^{(\alpha)}_{+}(z)e^{n\left(\chi_{\alpha}(z)-\chi_{\alpha}(-z)\right)}\left(1+O\left(\frac{1}{n}\right)\right), \quad n\to\infty,
 \]
 and~\eqref{eq:B_n_asympt} and~\eqref{eq:C_n_asympt}. Moreover, the expansions are locally uniform in $z\in\Omega^{(\alpha)}$. 
 
 Finally, according to Lemma~\ref{lem:non-posit}, the sequence $A_{n}^{(\alpha)}(z)$ decays exponentially faster in comparison with~$B_{n}^{(\alpha)}(z)$, for $n\to\infty$ and $z\in\Omega^{(\alpha)}$. Consequently, the sequence~$A_{n}^{(\alpha)}(z)$ does not contribute to the leading term of the asymptotic expansion of~$q_{n}^{(\alpha)}(z)$ for $z\in\Omega^{(\alpha)}$ as $n\to\infty$. This justifies the claim as stated.
\end{proof}

It follows from Lemma~\ref{lem:q_n_asympt} that, for a generic $z\in\Omega^{(\alpha)}$, the asymptotic behavior of $q_{n}^{(\alpha)}(z)$ is determined either by the term $B_{n}^{(\alpha)}(z)$ or $C_{n}^{(\alpha)}(z)$, for $n\to\infty$. The latter depends on the values of the real parts of functions occurring as arguments of the exponential functions in formulas~\eqref{eq:B_n_asympt} and~\eqref{eq:C_n_asympt}. The coincidence of these values means that
\[
 \Re\left(\chi_{\alpha}(z)-\chi_{\alpha}(-z)\right)=\Re\left(\chi_{\alpha}(z)+\chi_{\alpha}(-z)+\frac{\ii\pi z}{2}\right)
\]
which happens if and only if 
\begin{equation}
 \Re \chi_{\alpha}(-z)=\frac{\pi}{4}\Im z.
\label{eq:stokes_line}
\end{equation}
It is useful to introduce the following subsets of $\Omega^{(\alpha)}$:
\[
\Omega_{\pm}^{(\alpha)}:=\left\{z\in\Omega^{(\alpha)} \;\bigg|\; \Re \chi_{\alpha}(-z)\lessgtr\frac{\pi}{4}\Im z\right\}.
\]
It turns out that $\Omega_{\pm}^{(\alpha)}\neq\emptyset$ (it follows from Lemma~\ref{lem:stokes_line_prop} below) and their common boundary given by the equation~\eqref{eq:stokes_line} determines a Stokes line of $q_{n}^{(\alpha)}$, for $n\to\infty$, in the region~$\Omega^{(\alpha)}$. A similar change of the character of asymptotic behavior exhibits the original sequence of polynomials~$Q_{n}^{(\alpha)}$ as shown in the next statement.

\begin{prop}\label{prop:Q_n_asympt}
 For $\alpha>0$ and $n\to\infty$, we have the locally uniform asymptotic expansions
 \begin{equation}
  Q_{n}^{(\alpha)}(z)=(-1)^{n}f_{+}^{(\alpha)}(z)e^{-n\left(\chi_{\alpha}(z)+\chi_{\alpha}(-z)+\ii\pi z/2\right)}\left(1+O\left(\frac{1}{n}\right)\right), \quad\mbox{ for } z\in\Omega_{+}^{(\alpha)},
  \label{eq:Q_n_asympt_plus}
 \end{equation}
 and
 \begin{equation}
  Q_{n}^{(\alpha)}(z)=\ii^{n}f_{-}^{(\alpha)}(z)e^{-n\left(\chi_{\alpha}(z)-\chi_{\alpha}(-z)\right)}\left(1+O\left(\frac{1}{n}\right)\right), \quad\mbox{ for } z\in\Omega_{-}^{(\alpha)},
  \label{eq:Q_n_asympt_minus}
 \end{equation}
 where
 \begin{equation}
  f_{\pm}^{(\alpha)}(z):=\frac{1}{2}\frac{\left(\sqrt{(1-z)^{2}+4\alpha^{2}}\pm(1-z)\right)^{\!1/2}\left(\sqrt{(1+z)^{2}+4\alpha^{2}}+1+z\right)^{\!1/2}}{\left((1-z)^{2}+4\alpha^{2}\right)^{1/4}\left((1+z)^{2}+4\alpha^{2}\right)^{1/4}}.
 \label{eq:def_f_plusminus}
 \end{equation}
\end{prop}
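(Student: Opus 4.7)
The strategy is to extract Proposition~\ref{prop:Q_n_asympt} from Lemma~\ref{lem:q_n_asympt} by undoing the $1/n$-shift built into the definition~\eqref{eq:def_q_n}. Inverting that definition yields $Q_n^{(\alpha)}(w)=(-\ii)^n\alpha n\,q_n^{(\alpha)}(w+1/n)$, so the task reduces to finding the large-$n$ asymptotics of $q_n^{(\alpha)}(w+1/n)$ for $w\in\Omega_\pm^{(\alpha)}$. Since $\Omega_\pm^{(\alpha)}$ are open, any compact subset $K\subset\Omega_\pm^{(\alpha)}$ satisfies $K+1/n\subset\Omega_\pm^{(\alpha)}$ for all sufficiently large $n$, hence the locally uniform expansions~\eqref{eq:B_n_asympt}--\eqref{eq:C_n_asympt} from Lemma~\ref{lem:q_n_asympt} apply at the shifted argument. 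Comparing the moduli of the two exponentials gives
\[
\frac{|B_n^{(\alpha)}(z)|}{|C_n^{(\alpha)}(z)|}\asymp\exp\!\bigl[2n\bigl(\Re\chi_\alpha(-z)-\tfrac{\pi}{4}\Im z\bigr)\bigr],
\]
so by the very definition of $\Omega_\pm^{(\alpha)}$ the $B$-summand is exponentially dominant on $\Omega_-^{(\alpha)}$ and the $C$-summand on $\Omega_+^{(\alpha)}$; the subdominant term is absorbed into the relative $O(1/n)$ error.

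The finite correction produced by the shift is then obtained by a first-order Taylor expansion around $w$. The prefactors satisfy $g_\pm^{(\alpha)}(w+1/n)=g_\pm^{(\alpha)}(w)(1+O(1/n))$, and expanding the phases yields
\[
-n\bigl[\chi_\alpha(w+1/n)\pm\chi_\alpha(-w-1/n)\bigr]=-n\bigl[\chi_\alpha(w)\pm\chi_\alpha(-w)\bigr]-\chi_\alpha'(w)\pm\chi_\alpha'(-w)+O(1/n),
\]
with $\chi_\alpha'$ given explicitly by~\eqref{eq:chi_dif}, while the additional $\ii\pi z/2$ piece inside the exponential of $C_n^{(\alpha)}$ contributes a constant factor $e^{-\ii\pi/2}=-\ii$. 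A short bookkeeping of the powers of $\ii$ then produces the scalar prefactor $\ii^n\alpha$ on the $B$-branch (via $(-\ii)^n(-1)^n=\ii^n$) and $(-1)^n\alpha$ on the $C$-branch (via $(-\ii)^n(-1)^n\,\ii^{n+1}\cdot(-\ii)=(-1)^n$), which matches precisely the powers of $\ii$ displayed in~\eqref{eq:Q_n_asympt_plus}--\eqref{eq:Q_n_asympt_minus}.

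The remaining and principal step is the algebraic identification
\[
\alpha\,g_\pm^{(\alpha)}(w)\,e^{-\chi_\alpha'(w)\pm\chi_\alpha'(-w)}=f_\pm^{(\alpha)}(w).
\]
By~\eqref{eq:chi_dif} this exponential factor equals the product $(2\alpha)^{-1}\bigl(1+w+\sqrt{(1+w)^2+4\alpha^2}\bigr)^{1/2}\bigl(1-w+\sqrt{(1-w)^2+4\alpha^2}\bigr)^{1/2}$ in the $B$-case and the ratio of the same two square roots in the $C$-case. Multiplying by $g_\pm^{(\alpha)}(w)$ and invoking the elementary identity $\bigl(\sqrt{(1-w)^2+4\alpha^2}-(1-w)\bigr)\bigl(\sqrt{(1-w)^2+4\alpha^2}+(1-w)\bigr)=4\alpha^2$ to split $\sqrt{(1-w)^2+4\alpha^2}\pm(1-w)$ as a product of two square roots, one of which cancels a matching denominator factor, collapses the whole expression to the right-hand side of~\eqref{eq:def_f_plusminus}. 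The principal branches used throughout are unambiguous because $\Re\bigl((1\pm w)^2+4\alpha^2\bigr)>0$ on $\Omega^{(\alpha)}$, so no branch-cut subtleties arise. I expect this last rearrangement to be the chief, though entirely elementary, point of labour.
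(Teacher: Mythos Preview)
Your proof is correct and follows essentially the same route as the paper's: both invert~\eqref{eq:def_q_n}, feed the shifted argument into Lemma~\ref{lem:q_n_asympt}, use the locally uniform $O(1/n)$ expansion to Taylor-expand the phase $e^{-n\phi_\alpha(w+1/n)}=e^{-n\phi_\alpha(w)}e^{-\phi_\alpha'(w)}(1+O(1/n))$, and then identify $\alpha\,g_\pm^{(\alpha)}\,e^{-\phi_\alpha'}$ with $f_\pm^{(\alpha)}$ via the explicit formula~\eqref{eq:chi_dif}. The paper presents the computation slightly more compactly by packaging the entire derivative as $\phi_\alpha'$ rather than splitting into $\chi_\alpha'(w)\pm\chi_\alpha'(-w)$ plus the $\ii\pi/2$ piece, but the content is identical.
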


\begin{proof}
 Let $z\in\Omega_{+}^{(\alpha)}$. Then the leading term of the asymptotic expansion of~$q_{n}^{(\alpha)}(z)$ is determined by $C_{n}^{(\alpha)}(z)$ from Lemma~\ref{lem:q_n_asympt} and the corresponding asymptotic formula reads
 \begin{equation}
  q_{n}^{(\alpha)}(z)=\frac{(-1)^{n}\ii^{n+1}}{n}g^{(\alpha)}_{+}(z)e^{-n\phi_{\alpha}(z)}\left(1+O\left(\frac{1}{n}\right)\right), \quad n\to\infty,
 \label{eq:q_n_asympt_by_C}
 \end{equation}
 where we temporarily denote $\phi_{\alpha}(z):=\chi_{\alpha}(z)+\chi_{\alpha}(-z)+\ii\pi z/2$.
 Moreover, the asymptotic expansion~\eqref{eq:q_n_asympt_by_C} is locally uniform in $z\in\Omega_{+}^{(\alpha)}$. Using~\eqref{eq:def_q_n} and the uniformity of the expansion~\eqref{eq:q_n_asympt_by_C}, one deduces
 \[
  Q_{n}^{(\alpha)}(z)=(-1)^{n}f_{+}^{(\alpha)}(z)e^{-n\phi_{\alpha}(z)}\left(1+O\left(\frac{1}{n}\right)\right), \quad n\to\infty,
 \]
 for 
 \begin{equation}
  f_{+}^{(\alpha)}(z)=\ii\alpha g^{(\alpha)}_{+}(z)e^{-\phi_{\alpha}'(z)}.
 \label{eq:f_plus_inproof}
 \end{equation}
 Taking into account~\eqref{eq:chi_dif}, one easily computes
 \[
  e^{-\phi_{\alpha}'(z)}=-\ii\frac{\left(1+z+\sqrt{(1+z)^{2}+4\alpha^{2}}\right)^{1/2}}{\left(1-z+\sqrt{(1-z)^{2}+4\alpha^{2}}\right)^{1/2}}.
 \]
 Thus, the equation~\eqref{eq:f_plus_inproof} together with~\eqref{eq:def_g} yields the explicit expression~\eqref{eq:def_f_plusminus} for $f_{+}^{(\alpha)}$.
 
 The second asymptotic expansion~\eqref{eq:Q_n_asympt_minus} is to be deduced analogously from Lemma~\ref{lem:q_n_asympt} for $z\in\Omega_{-}^{(\alpha)}$.
\end{proof}

With the aid of~\eqref{eq:chi_dif}, one immediately verifies the following corollary of Proposition~\ref{prop:Q_n_asympt}.

\begin{cor}\label{cor:limit_cauchy}
 For $\alpha>0$, one has the limit
 \begin{equation}
  \lim_{n\to\infty}\frac{\partial_{z}Q_{n}^{(\alpha)}(z)}{nQ_{n}^{(\alpha)}(z)}\\
  =\begin{cases}-\frac{\ii\pi}{2}+h_{\alpha}(z)-h_{\alpha}(-z),&  \mbox{ for } z\in\Omega_{+}^{(\alpha)}, \\
  -\log(2\alpha)+h_{\alpha}(z)+h_{\alpha}(-z),&\mbox{ for } z\in\Omega_{-}^{(\alpha)}, \\  
   \end{cases}
 \end{equation}
 where
 \begin{equation}
  h_{\alpha}(z):=\frac{1}{2}\log\left(1+z+\sqrt{(1+z)^{2}+4\alpha^{2}}\right).
  \label{eq:def_h}
 \end{equation}
 Moreover, the convergence is locally uniform in $z\in\Omega_{\pm}^{(\alpha)}$.
\end{cor}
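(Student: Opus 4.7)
The plan is to take the logarithmic derivative of the asymptotic expansion from Proposition~\ref{prop:Q_n_asympt} and identify the leading order contribution. Since the asymptotic expansions \eqref{eq:Q_n_asympt_plus} and \eqref{eq:Q_n_asympt_minus} are locally uniform in $z\in\Omega_{\pm}^{(\alpha)}$ and $Q_{n}^{(\alpha)}$ is entire, Cauchy's integral formula applied on a small disk contained in $\Omega_{\pm}^{(\alpha)}$ allows me to differentiate the expansions term by term while preserving the $O(1/n)$ error. Concretely, writing
\[
 \phi_{\alpha}^{+}(z):=\chi_{\alpha}(z)+\chi_{\alpha}(-z)+\frac{\ii\pi z}{2}, \qquad \phi_{\alpha}^{-}(z):=\chi_{\alpha}(z)-\chi_{\alpha}(-z),
\]
differentiation of \eqref{eq:Q_n_asympt_plus} gives, for $z\in\Omega_{+}^{(\alpha)}$,
\[
 \partial_{z}Q_{n}^{(\alpha)}(z)=(-1)^{n}e^{-n\phi_{\alpha}^{+}(z)}\left[-n\,(\phi_{\alpha}^{+})'(z)\,f_{+}^{(\alpha)}(z)+(f_{+}^{(\alpha)})'(z)+O(1)\right],
\]
and dividing by $nQ_{n}^{(\alpha)}(z)$ yields
\[
 \frac{\partial_{z}Q_{n}^{(\alpha)}(z)}{nQ_{n}^{(\alpha)}(z)}=-(\phi_{\alpha}^{+})'(z)+\frac{(f_{+}^{(\alpha)})'(z)}{n\,f_{+}^{(\alpha)}(z)}+O\!\left(\frac{1}{n}\right),
\]
which converges locally uniformly on $\Omega_{+}^{(\alpha)}$ to $-(\phi_{\alpha}^{+})'(z)$ as $n\to\infty$. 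The analogous computation on $\Omega_{-}^{(\alpha)}$ produces the limit $-(\phi_{\alpha}^{-})'(z)$.

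It remains to identify these derivatives with the right-hand side of the claimed formula. Formula~\eqref{eq:chi_dif} can be recast as
\[
 \chi_{\alpha}'(z)=\tfrac{1}{2}\log(2\alpha)-h_{\alpha}(z),
\]
with $h_{\alpha}$ as in~\eqref{eq:def_h}. Using the chain rule $\frac{\dd}{\dd z}\chi_{\alpha}(-z)=-\chi_{\alpha}'(-z)$, one gets
\[
 -(\phi_{\alpha}^{+})'(z)=-\chi_{\alpha}'(z)+\chi_{\alpha}'(-z)-\frac{\ii\pi}{2}=h_{\alpha}(z)-h_{\alpha}(-z)-\frac{\ii\pi}{2},
\]
and similarly
\[
 -(\phi_{\alpha}^{-})'(z)=-\chi_{\alpha}'(z)-\chi_{\alpha}'(-z)=-\log(2\alpha)+h_{\alpha}(z)+h_{\alpha}(-z),
\]
which matches the two cases of the corollary.

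There is no serious obstacle here; the only point that deserves a sentence of justification is that the $O(1/n)$ remainder in the asymptotic expansion survives complex differentiation without degrading, which is handled by the standard argument via Cauchy's formula on a small compact disk sitting inside $\Omega_{\pm}^{(\alpha)}$ together with the local uniformity asserted in Proposition~\ref{prop:Q_n_asympt}. After that, the identification of the limit with the expression involving $h_{\alpha}$ is a direct substitution using \eqref{eq:chi_dif}.
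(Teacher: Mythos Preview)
Your argument is correct and is exactly the approach the paper takes: the corollary is stated as an immediate consequence of Proposition~\ref{prop:Q_n_asympt} together with~\eqref{eq:chi_dif}, and you have simply spelled out the details of the logarithmic differentiation and the identification $\chi_{\alpha}'(z)=\tfrac{1}{2}\log(2\alpha)-h_{\alpha}(z)$. The only implicit step worth noting is that dividing by $f_{\pm}^{(\alpha)}(z)$ is legitimate since these functions do not vanish on $\Omega_{\pm}^{(\alpha)}$, but this is a routine check.
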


 The last lemma of this subsection provides information about properties of the curve defined by the equation~\eqref{eq:stokes_line} in~$\Omega^{(\alpha)}$. Recall the definition of~$\alpha_{0}$ as the unique positive solution of~\eqref{eq:def_alp_0}.

\begin{lem}\label{lem:stokes_line_prop}
 The equation~\eqref{eq:stokes_line} determines a simple smooth curve in~$\Omega^{(\alpha)}$ with one end-point at $1+2\ii\alpha$ and the second end-point located
 in $(0,1)$, if $0<\alpha<\alpha_{0}$; at the origin, if $\alpha=\alpha_{0}$; and in $\ii(0,2\alpha)$, if $\alpha>\alpha_{0}$.
\end{lem}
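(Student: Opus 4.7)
The plan is to recognise the curve as the zero level set of the harmonic function $F(z) := \Re\chi_\alpha(-z) - \tfrac{\pi}{4}\Im z = \Re H(z)$, where $H(z) := \chi_\alpha(-z) - \tfrac{i\pi z}{4}$. Since $\chi_\alpha$ has a square-root branch point at $w = -1-2i\alpha$ corresponding to the corner $z = 1+2i\alpha$ and is analytic elsewhere on $-\overline{\Omega^{(\alpha)}}$, the function $H$ is analytic on $\overline{\Omega^{(\alpha)}} \setminus \{1+2i\alpha\}$ and $F$ is harmonic in $\Omega^{(\alpha)}$, continuous up to the boundary. The proof then breaks into three steps: boundary analysis of $F$, absence of interior critical points, and a topological conclusion.

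For the boundary analysis I would trace $F$ along the four edges of $\Omega^{(\alpha)}$. On the bottom edge $z = x \in [0,1]$, $F(x) = \chi_\alpha(-x)$ is real and strictly increasing by $\chi_\alpha'(x) < 0$ for $x \in (-1,1)$ (established in the proof of Lemma~\ref{lem:non-posit}); moreover $2F(0) = \sqrt{1+4\alpha^2} + \log\tfrac{2\alpha}{1+\sqrt{1+4\alpha^2}}$ coincides with the left-hand side of \eqref{eq:def_alp_0}, so $F(0) \lessgtr 0$ according as $\alpha \lessgtr \alpha_0$, while $F(1) = \alpha > 0$. On the right edge $z = 1+iy$, $y \in [0, 2\alpha]$, the radicand $(1-z)^2 + 4\alpha^2 = 4\alpha^2 - y^2$ is real and non-negative, yielding the explicit expression $F(1+iy) = \tfrac{1}{2}\sqrt{4\alpha^2-y^2} + \tfrac{y}{2}\arctan\tfrac{y}{\sqrt{4\alpha^2-y^2}} - \tfrac{\pi y}{4}$ whose derivative equals $\tfrac{1}{2}\arctan\tfrac{y}{\sqrt{4\alpha^2-y^2}} - \tfrac{\pi}{4}$, strictly negative on $[0, 2\alpha)$; hence $F$ decreases strictly from $\alpha$ down to $F(1+2i\alpha) = 0$, where a neat cancellation uses $\chi_\alpha(-1-2i\alpha) = \pi\alpha/2$. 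For the top edge $z = x+2i\alpha$ and the left edge $z = iy$ I would set $\sqrt{(1-z)^2 + 4\alpha^2} = \mu - i\nu$ and $\sqrt{(1+iy)^2 + 4\alpha^2} = P + iQ$ (principal branches, with $\mu, \nu, P, Q \geq 0$), and use \eqref{eq:chi_dif} to derive $\tfrac{d}{dx}F(x+2i\alpha) = \tfrac{1}{2}\log\tfrac{\sqrt{(1-x+\mu)^2 + (2\alpha+\nu)^2}}{2\alpha}$ and $\tfrac{d}{dy}F(iy) = \tfrac{1}{2}\arctan\tfrac{y+Q}{1+P} - \tfrac{\pi}{4}$. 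The elementary inequalities $(1-x+\mu)^2 + (2\alpha+\nu)^2 - 4\alpha^2 = (1-x+\mu)^2 + \nu(\nu+4\alpha) \geq 0$ (equality only at $x=1$) and $\arctan < \tfrac{\pi}{2}$ give strict monotonicity: increase on the top edge and decrease on the left. Combined with $F(1+2i\alpha) = 0$ these produce $F(2i\alpha) < 0$, whence $F < 0$ throughout the rest of the top edge and $F$ descends on the left edge from $\chi_\alpha(0)$ to $F(2i\alpha) < 0$. The assembled picture shows that the boundary zeros of $F$ are precisely $1 + 2i\alpha$ and a second point $\xi(\alpha)$ lying in $(0,1)$, at $0$, or in $i(0,2\alpha)$ according to $\alpha < \alpha_0$, $\alpha = \alpha_0$, or $\alpha > \alpha_0$ respectively.

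For the interior, critical points of $F$ coincide with zeros of $H'(z) = -\chi_\alpha'(-z) - \tfrac{i\pi}{4}$; via \eqref{eq:chi_dif} the equation $\chi_\alpha'(-z) = -\tfrac{i\pi}{4}$ reduces to $1 - z + \sqrt{(1-z)^2 + 4\alpha^2} = 2i\alpha$, whose unique solution $z = 1 - 2i\alpha$ lies outside $\overline{\Omega^{(\alpha)}}$. Hence $\nabla F \neq 0$ on $\Omega^{(\alpha)}$, making $\{F = 0\} \cap \Omega^{(\alpha)}$ a smooth one-dimensional submanifold. Finally, since $\overline{\Omega^{(\alpha)}}$ is a topological disk whose boundary is split by the two zeros $\xi(\alpha), 1+2i\alpha$ into one arc $\gamma_+$ where $F > 0$ and one arc $\gamma_-$ where $F < 0$, a standard nodal-domain argument — using the maximum principle to force every connected component of $\{F \neq 0\}$ in $\Omega^{(\alpha)}$ to reach the boundary, combined with the connectedness of $\gamma_\pm$ and the absence of interior critical points — implies that the interior zero set is a single simple smooth arc joining $\xi(\alpha)$ to $1 + 2i\alpha$. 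The technically heaviest part of the programme will be the top- and left-edge derivative calculations with careful tracking of the principal branches; the critical-point equation is then a one-line exercise and the topological wrap-up is routine.
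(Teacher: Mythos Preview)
Your approach is essentially the paper's: recognise the curve as the zero set of a harmonic function, analyse it along the four edges of the rectangle, and conclude simplicity from general properties of harmonic level sets. The paper is terser on the last point (it invokes the maximum principle/open mapping theorem to rule out loops rather than computing critical points), and it dispatches the top edge by noting only that $\Re f'(x+2\ii\alpha)\neq 0$ without your explicit sign determination; but the logical skeleton is the same.

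One slip to fix: your holomorphic completion should be $H(z)=\chi_\alpha(-z)+\tfrac{\ii\pi z}{4}$ (as in the paper's $f$), not $-\tfrac{\ii\pi z}{4}$, since $\Re(\ii\pi z/4)=-\tfrac{\pi}{4}\Im z$. With the corrected sign the critical-point equation becomes $1-z+\sqrt{(1-z)^2+4\alpha^2}=-2\ii\alpha$, whose unique solution is $z=1+2\ii\alpha$, the branch-point corner itself; this still lies outside the open rectangle $\Omega^{(\alpha)}$, so your conclusion that $\nabla F\neq 0$ in the interior survives unchanged.
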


\begin{proof}
 We denote $f(z):=\chi_{\alpha}(-z)+\pi\ii z/4$ only for the needs of this proof.
 Then the curve is determined by the equation $\Re f(z)=0$ for $z\in\Omega^{(\alpha)}$. First, the smoothness of the curve follows from the analyticity of $\chi_{\alpha}$ in $-\Omega^{(\alpha)}$. In addition, the analyticity of $f$ implies that $\Re f$ is harmonic in~$\Omega^{(\alpha)}$. Then level curve defined by the $\Re f(z)=0$ for $z\in\Omega^{(\alpha)}$ cannot contain a loop. Otherwise the Maximum Modulus Principle together with the Open Mapping Theorem would imply that $f$ is constant in~$\Omega^{(\alpha)}$ which is not the case.
 
 Next, it is straightforward to check that $\Re f(1+2\alpha\ii)=0$ and hence $1+2\alpha\ii$ is one of the end-points.

 It remains to verify the statements for the second end-point which is to be done by inspection of the values of $\Re f$ on the boundary of~$\Omega^{(\alpha)}$. The analysis requires several computations but is not difficult. Therefore we only indicate it. For example, for the values of $f$ on the right side of the rectangle~$\Omega^{(\alpha)}$, one finds
 \[
  \Re f(1+2\alpha\ii\sin\omega)=\alpha\cos\omega+\alpha\left(\omega-\frac{\pi}{2}\right)\sin\omega,
 \]
 for $\omega\in[0,\pi/2]$, which is a strictly decreasing function of $\omega$. The only point at which it vanishes is $\omega=\pi/2$ with corresponds to the end-point $1+2\alpha\ii$.
 
 Similarly, referring to~\eqref{eq:chi_dif}, one checks that
 \[
  \Re f'(x+2\alpha\ii)=-\frac{1}{2}\log\frac{2\alpha}{|1-x-2\alpha\ii+\sqrt{(1-x)(1-x-4\alpha\ii)}|}\neq0, \quad \forall x\in(0,1),
 \]
 meaning that the function $\Re f$ restricted to the upper side of the rectangle~$\Omega^{(\alpha)}$ is either strictly increasing or strictly decreasing. In any case, the point $1+2\alpha\ii$ is the only solution of $\Re f(z)=0$ for $z=x+2\alpha\ii$ and $x\in[0,1]$.
 
 Thus the second end-point can occur only on the real or imaginary line. Using~\eqref{eq:chi_dif} once more, one obtains
 \[
  \Re f'(x)=-\frac{1}{2}\log\frac{2\alpha}{1-x+\sqrt{(1-x)^{2}+4\alpha^{2}}}>0, \quad \forall x\in(0,1).
 \]
 In addition, $\Re f(1)=\alpha>0$. Consequently, the second end-point is located in $(0,1)$ if and only if $\Re f(0)<0$ which is further equivalent to $0<\alpha<\alpha_{0}$. If $\alpha=\alpha_{0}$, then $\Re f(0)=0$ and hence the second end-point coincides with the origin. Finally, for $\alpha>\alpha_{0}$, the second end-point has to be located in the remaining side of~$\Omega^{(\alpha)}$ which is~$\ii(0,2\alpha)$.
\end{proof}

\begin{rem}
 Let $\xi(\alpha)$ stands for the second end-point of the curve from Lemma~\ref{lem:stokes_line_prop} located on the real or the imaginary line. Without going 
 into details, let us point out that 
 \[
  \lim_{\alpha\to0+}\xi(\alpha)=1 \quad\mbox{ and }\quad \lim_{\alpha\to+\infty}\left(\xi(\alpha)-2\alpha\ii\right)=0.
 \]
 In fact, if $0<\alpha\leq\alpha_{0}$, one can even show that
 \[
  \xi(\alpha)=1-\frac{\alpha}{\alpha_{0}}
 \]
 by verifying that $\chi_{\alpha}(-1+\alpha/\alpha_{0})=0$.
\end{rem}

\subsection{The asymptotic zero distribution}

Now we are in the position to prove our main result.

\begin{proof}[Proof of Theorem~\ref{thm:main}]
 The general identity~\eqref{eq:cauchy_root_meas} together with Corollary~\ref{cor:limit_cauchy} yields a formula for the limiting Cauchy transform:
 \begin{equation}
  \lim_{n\to\infty}C_{\mu_{n}^{(\alpha)}}(z)=C(z):=\begin{cases}-\frac{\ii\pi}{2}+h_{\alpha}(z)-h_{\alpha}(-z),&  \mbox{ for } z\in\Omega_{+}^{(\alpha)}, \\
  -\log(2\alpha)+h_{\alpha}(z)+h_{\alpha}(-z),&\mbox{ for } z\in\Omega_{-}^{(\alpha)}, \\  
   \end{cases}
 \label{eq:lim_cauchy}
 \end{equation}
 where $\mu_{n}^{(\alpha)}$ stands for the root-counting measure of~$Q_{n}^{(\alpha)}$. Taking also Lemma~\eqref{lem:local} into account, one concludes that the function~$C$ extends as an analytic function to the entire first quadrant $\{z\in\C \mid \Re z>0, \Im z>0\}$ except the cut given by the common boundary of~$\Omega_{+}^{(\alpha)}$ and~$\Omega_{-}^{(\alpha)}$ or equivalently by solutions of the equation~\eqref{eq:stokes_line} in $\Omega^{(\alpha)}$. This equation coincides with~\eqref{eq:impl_supp_mu_1}.
 
 To extend~$C$ beyond the first quadrant, one uses the symmetry relations~\eqref{eq:symm_Q_n} which imply the limiting function~$C$ extends according to the equations
 \begin{equation}
  \overline{C(z)}=C(\overline{z}) \quad\mbox{ and }\quad C(-z)=-C(z).
  \label{eq:symm_cauchy}
 \end{equation}
  In total, we see that the sequence $C_{\mu_{n}}^{(\alpha)}$ converges to the function $C$, as $n\to\infty$, which is analytic everywhere except the cut given by~\eqref{eq:stokes_line} in the first quadrant, its three symmetric copies in the remaining three quadrants, and possibly in certain subsets of the real and imaginary line. As a result, $\mu_{n}^{(\alpha)}$ converges weakly to a measure~$\mu^{(\alpha)}$ whose Cauchy transform coincides with $C$, i.e., $C=C_{\mu^{(\alpha)}}$ in the domain of analyticity.
 
 The measure~$\mu^{(\alpha)}$ is determined by the discontinuities of~$C$. Bearing in mind the symmetry of $\mu^{(\alpha)}$ with respect to the axes and Lemma~\ref{lem:local}, we may restrict the analysis again to the rectangle $[0,1]+\ii[0,2\alpha]$. Thus, one can write the measure $\mu^{(\alpha)}$ restricted to~$[0,1]+\ii[0,2\alpha]$ as the sum $\mu_{1}^{(\alpha)}+\mu^{(\alpha)}_{2}$, 
 where the measure $\mu^{(\alpha)}_{1}$ is induced by the discontinuity of $C$ on the cut given by~\eqref{eq:stokes_line} and $\mu^{(\alpha)}_{2}$ by the possible discontinuity of $C$ in $[0,1]\cup\ii[0,2\alpha]$.
 
 First, we discuss~$\mu^{(\alpha)}_{1}$. Suppose that the curve determined by~\eqref{eq:stokes_line} is oriented such that it starts at the point $\xi(\alpha)\in[0,1)\cup\ii[0,2\alpha)$ and ends at the point $1+2\alpha\ii$, see Lemma~\ref{lem:stokes_line_prop}. Then the set~$\Omega^{(\alpha)}_{\pm}$ lies on the $\pm$-side of the curve and, by~\eqref{eq:lim_cauchy} and~\eqref{eq:def_h}, we have
 \[
  C(z+)-C(z-)=-\frac{\pi\ii}{2}-\log\left(\frac{1}{2\alpha}\left(1-z+\sqrt{(1-z)^{2}+4\alpha^{2}}\right)\right),
 \]
 for $z$ lying on the curve. The function on the right-hand side is bounded in the entire closure of $\Omega^{(\alpha)}$ and hence integrable. Thus, by~\eqref{eq:plemelj-sokhotski}, the measure $\mu^{(\alpha)}_{1}$ is absolutely continuous on the curve and 
 one arrives at the formula~\eqref{eq:mu_1_dens} for its density.

 Second, we discuss $\mu_{2}^{(\alpha)}$ assuming that $\alpha<\alpha_{0}$. By Lemma~\ref{lem:stokes_line_prop}, $\xi(\alpha)\in(0,1)$. Then, using formulas~\eqref{eq:lim_cauchy} and~\eqref{eq:symm_cauchy}, one readily checks that $C$ extends continuously to $[0,1]\cup\ii[0,2\alpha]$ except the interval~$[0,\xi(\alpha)]$, where it has the jump
 \[
  C(x+)-C(x-)=2\ii\lim_{\epsilon\to0+}\Im C(x+\ii\epsilon)=-\ii\pi,
 \]
 for $0<x<\xi(\alpha)$. Thus, the application of~\eqref{eq:plemelj-sokhotski} yields the claim~(i) of Theorem~\ref{thm:main}.
 
 Next, suppose that $\alpha>\alpha_{0}$. Then $\xi(\alpha)\in\ii(0,2\alpha)$ by Lemma~\ref{lem:stokes_line_prop}, and, using formulas~\eqref{eq:lim_cauchy} and~\eqref{eq:symm_cauchy}, one verifies that the only discontinuity of~$C$ in $[0,1]\cup\ii[0,2\alpha]$ occurs in the line segment $\ii[0,\Im \xi(\alpha)]$, where 
 \[
  C(\ii y+)-C(\ii y-)=-2\lim_{\epsilon\to0+}\Re C(\epsilon+\ii y)=-2\log\left(\frac{1}{2\alpha}\left|1+\ii y+\sqrt{(1+\ii y)^{2}+4\alpha^{2}}\right|\right),
 \]
  for $0<y<\Im\xi(\alpha)$. Taking into account the obvious integrability of the above function for $y\in[0,\Im\xi(\alpha)]$, one arrives at the claim~(ii) of Theorem~\ref{thm:main} by applying~\eqref{eq:plemelj-sokhotski}.
  
  In the particular case when $\alpha=\alpha_{0}$, $\xi(\alpha)=0$ by Lemma~\ref{lem:stokes_line_prop} and the function~$C$ extends continuously to the axes except the origin. As a result, $\mu_{2}^{(\alpha)}=0$ which proves the claim~(iii) and completes the proof of Theorem~\ref{thm:main}.
\end{proof}

\section{Numerical illustrations}\label{sec:numer}

In this section, asymptotic properties of the distribution of zeros of polynomials $Q_{n}^{(\alpha)}$, as $n\to\infty$, are numerically illustrated in several plots. 
First, Figure~\ref{fig:rootplot} 
shows limiting curves of the roots, i.e., supports of measures $\mu^{(\alpha)}$ from Theorem~\ref{thm:main} in three different regimes.

\begin{figure}[htb!]
    \centering
    \begin{subfigure}[b]{0.32\textwidth}
        \includegraphics[width=\textwidth]{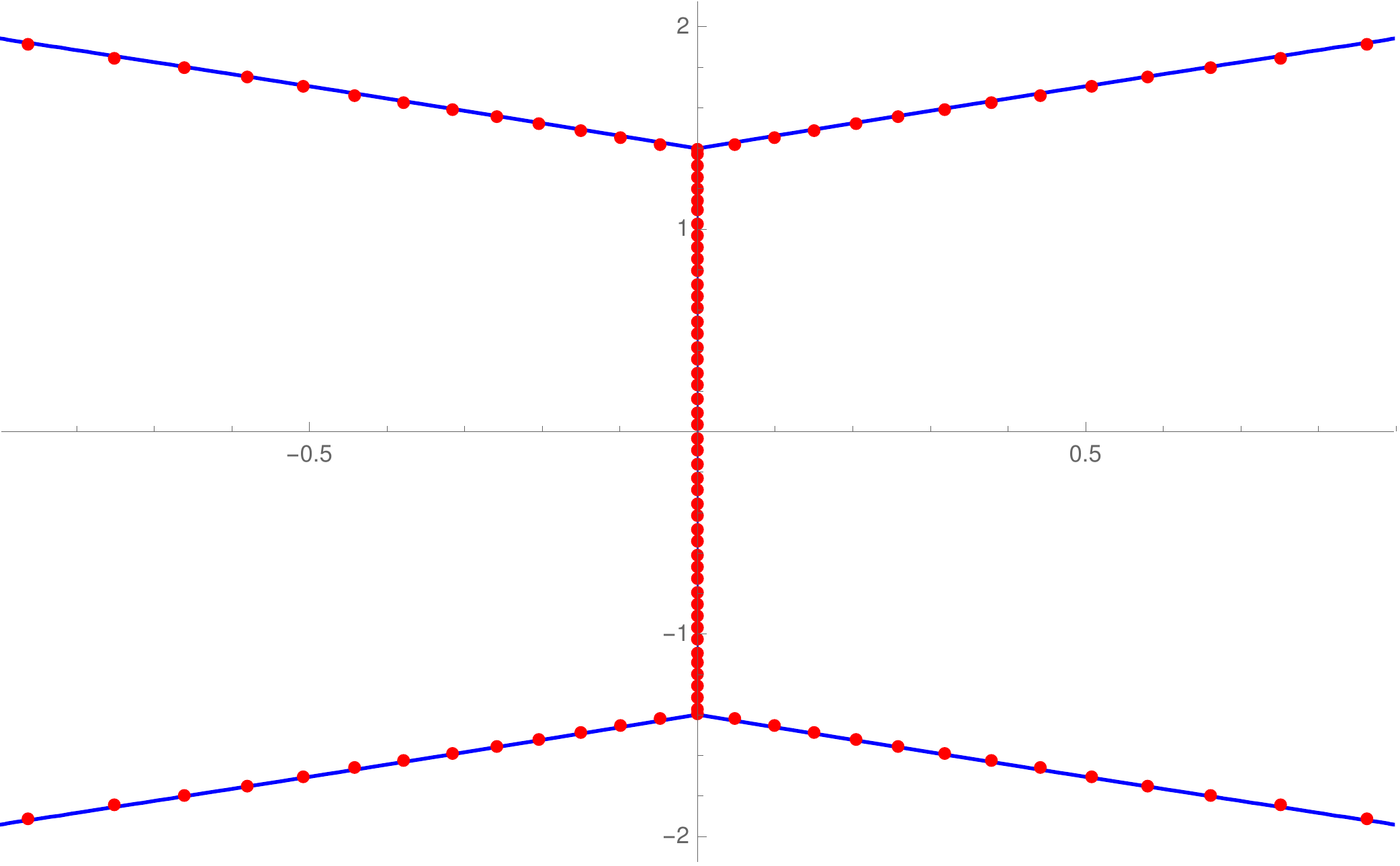}
        \caption{\centering $\alpha=1$}
    \end{subfigure}
    \begin{subfigure}[b]{0.32\textwidth}
        \includegraphics[width=\textwidth]{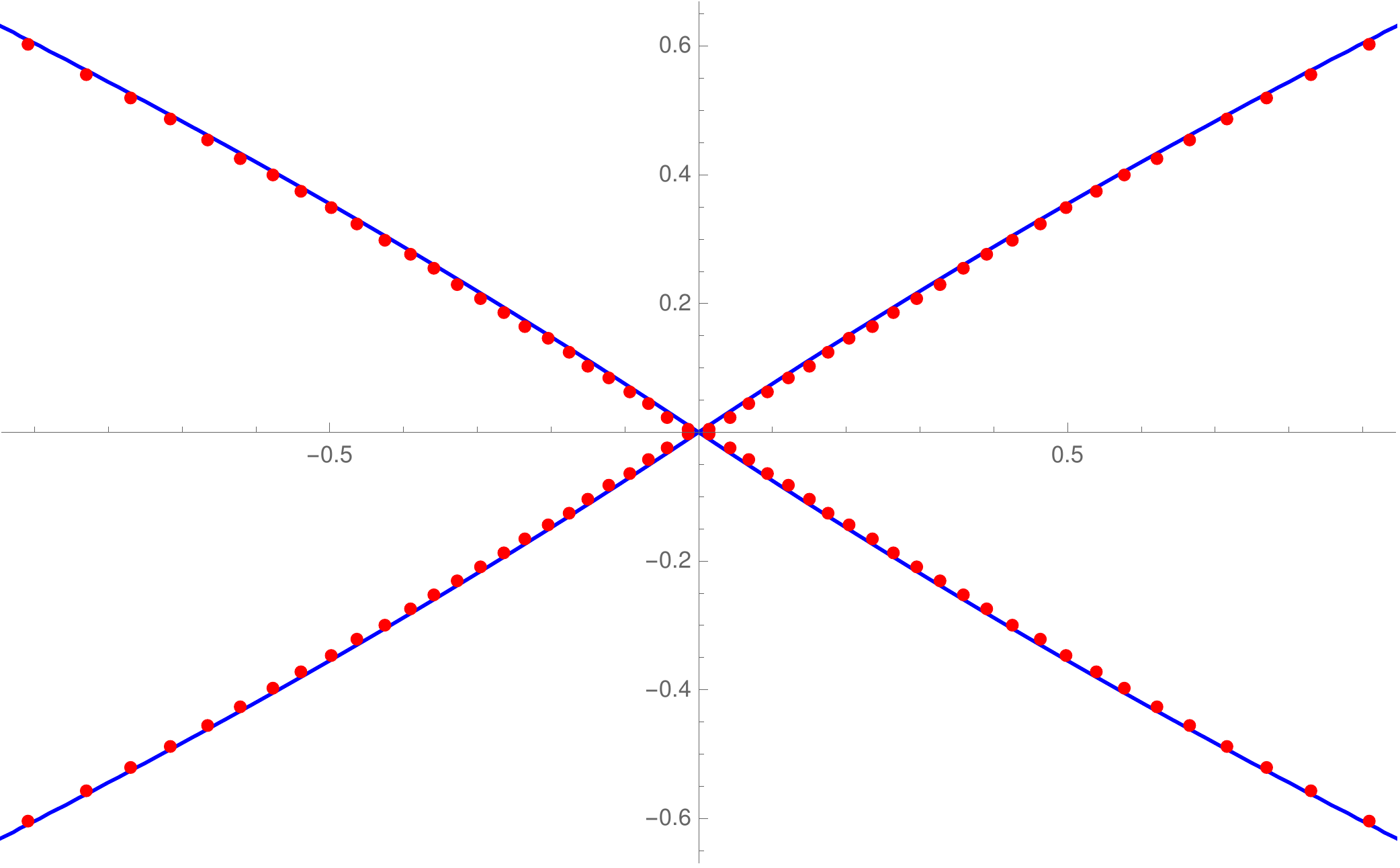}
        \caption{\centering $\alpha=\alpha_{0}\approx0.33137$}
    \end{subfigure}
    \begin{subfigure}[b]{0.32\textwidth}
        \includegraphics[width=\textwidth]{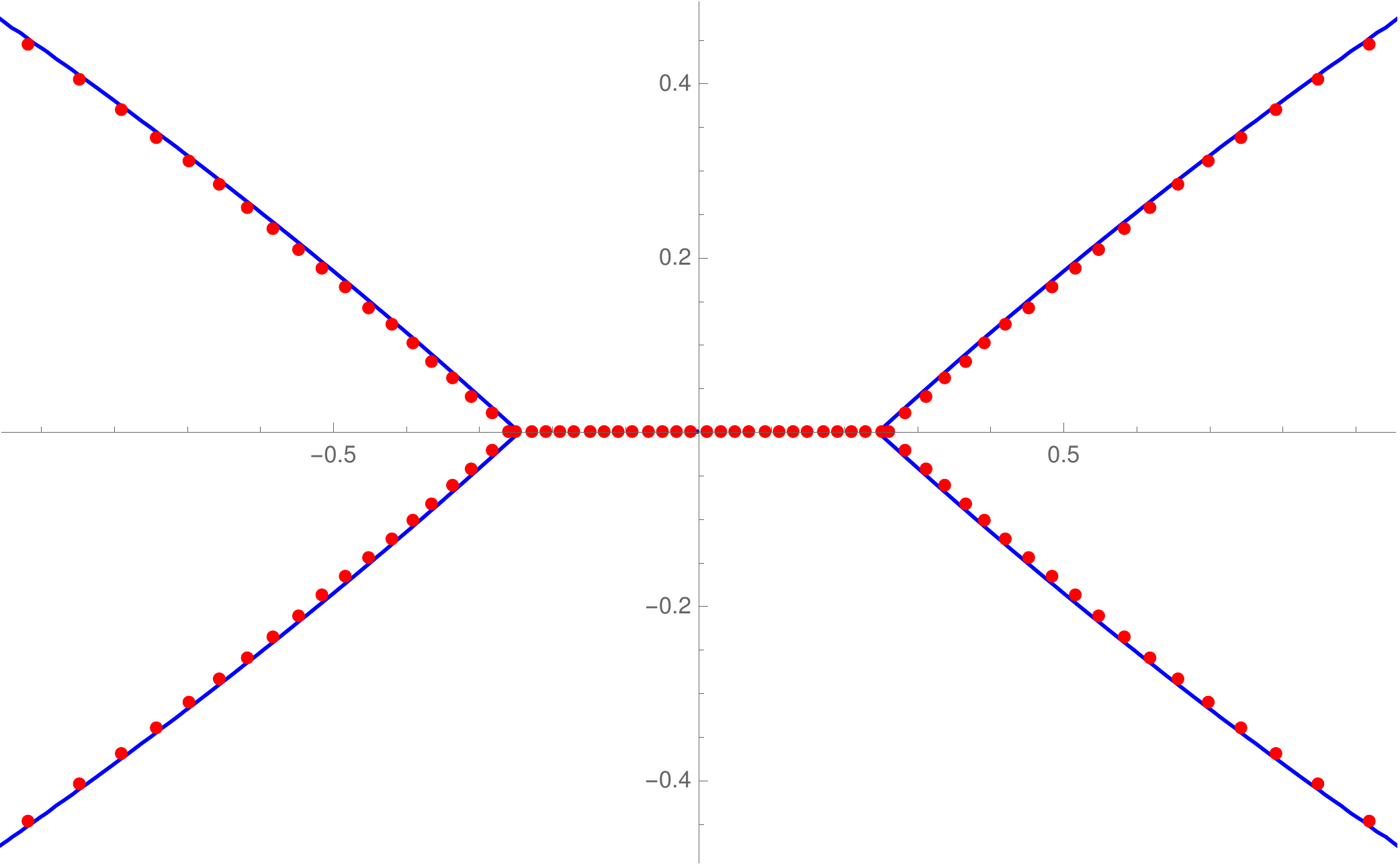}
        \caption{\centering $\alpha=0.25$}
    \end{subfigure}
    \caption{Red dots are roots of $Q_{n}^{(\alpha)}$ for $n=100$. Blue lines show the support of the corresponding asymptotic zero distribution.}
    \label{fig:rootplot}
\end{figure}

%
%

Second, for two choices of the parameter~$\alpha$, Figure~\ref{fig:density} shows the densities $\mu_{1}^{(\alpha)}$ and~$\mu_{2}^{(\alpha)}$ defined in Theorem~\ref{thm:main} and compare them with the corresponding histograms for distributions of roots of $Q_{n}^{(\alpha)}$ for $n=500$. Note that it is by no means obvious from~\eqref{eq:mu_1_dens} that $\mu_{1}^{(\alpha)}$ is a positive measure on the arc in $\Omega^{(\alpha)}$ given by~\eqref{eq:impl_supp_mu_1}. We parametrize the curve defined by~\eqref{eq:impl_supp_mu_1}, say~$\gamma_{1}$, by the real variable. It means that $\gamma_{1}(x)=x+\ii y(x)$, where $y=y(x)$ is implicitly defined by~\eqref{eq:impl_supp_mu_1} for $x\in(0,1)$, if $\alpha\geq\alpha_{0}$, or $x\in(\xi(\alpha),1)$, if $\alpha<\alpha_{0}$. Then straightforward manipulations of~\eqref{eq:impl_supp_mu_1} and~\eqref{eq:mu_1_dens} allow to express the density of~$\mu_{1}^{(\alpha)}$ in the form
\begin{equation}
 \frac{\dd\mu_{1}^{(\alpha)}}{\dd x}(x)=\frac{1}{\pi}\frac{\left(\log|\mathcal{Y}(x)|\right)^{2}+\left(\frac{\pi}{2}+\arg|\mathcal{Y}(x)|\right)^{2}}{\pi+2\arg\mathcal{Y}(x)},
\label{eq:mu_1_dens_positive_form}
\end{equation}
where
\[
 \mathcal{Y}(x)=\frac{1}{2\alpha}\left(1-x-\ii y(x)+\sqrt{(1-x-\ii y(x))^{2}+4\alpha^{2}}\right),
\]
from which the positivity of $\mu_{1}^{(\alpha)}$ can be readily seen. The formula~\eqref{eq:mu_1_dens_positive_form} is used in plots of Figure~\ref{fig:density} in~(B) and~(D). Concerning the density of the measure $\mu_{2}^{(\alpha)}$ supported on the real or imaginary line segment, we use the exact forms given in Theorem~\ref{thm:main} in plots of Figure~\ref{fig:density} in~(A) and~(C).

\begin{figure}[htb!]
    \centering
    \begin{subfigure}[b]{0.45\textwidth}
        \includegraphics[width=\textwidth]{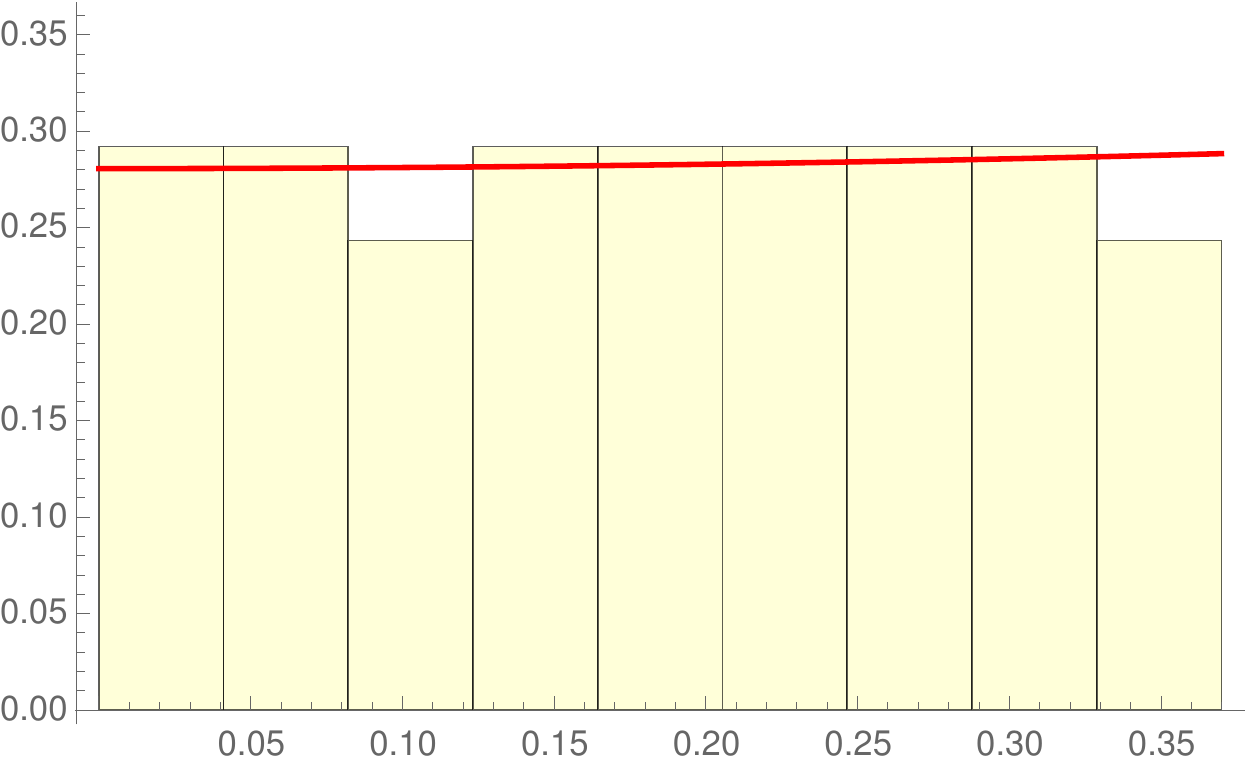}
        \caption{Density~$\dd\mu_{2}^{(0.5)}/\dd y$ and histogram of roots of $Q_{500}^{(0.5)}$ localized in the imaginary line segment~$\ii[0,\Im\xi(0.5)]\approx\ii[0,0.369]$.}
    \end{subfigure}
    \begin{subfigure}[b]{0.04\textwidth}
    $\,$
    \end{subfigure}
    \begin{subfigure}[b]{0.45\textwidth}
        \includegraphics[width=\textwidth]{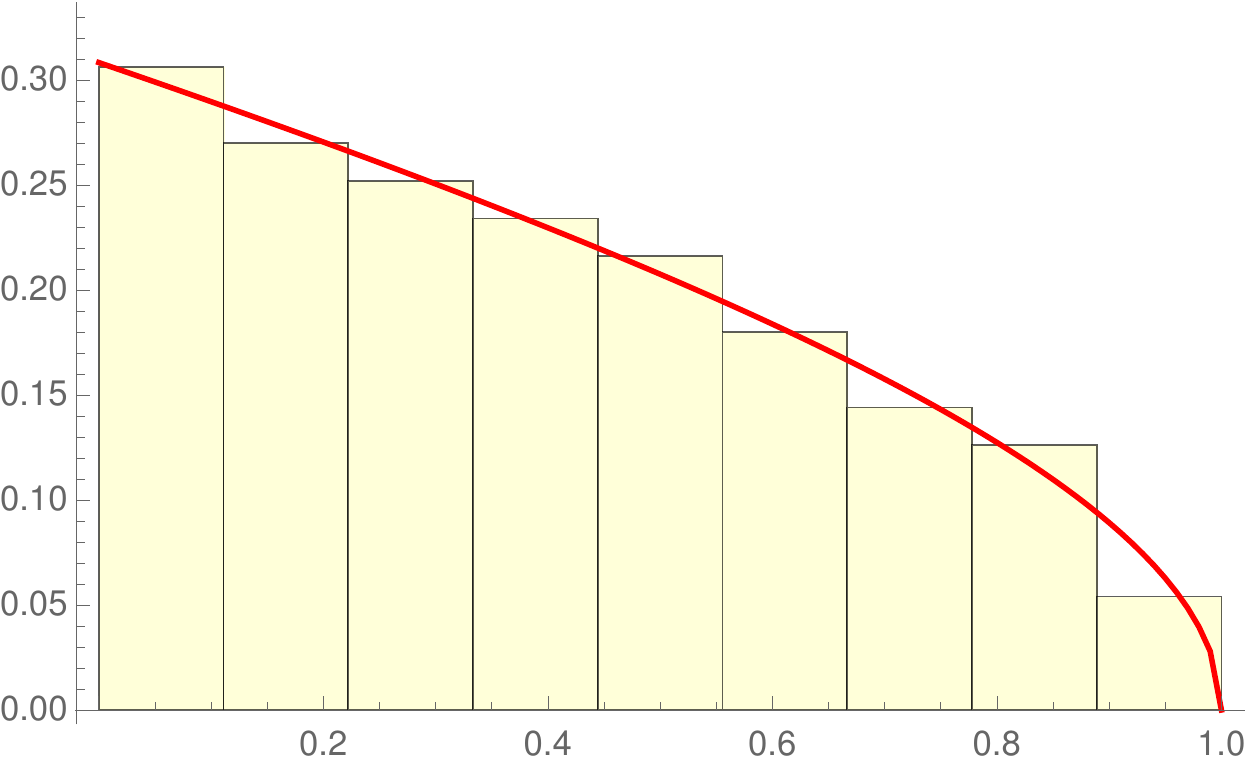}
        \caption{Density~$\dd\mu_{1}^{(0.5)}/\dd x$ and histogram of roots of $Q_{500}^{(0.5)}$ localized in~$(0,1)+\ii(0,1)$.\\
        }
    \end{subfigure}
    
    \begin{subfigure}[b]{0.45\textwidth}
        \includegraphics[width=\textwidth]{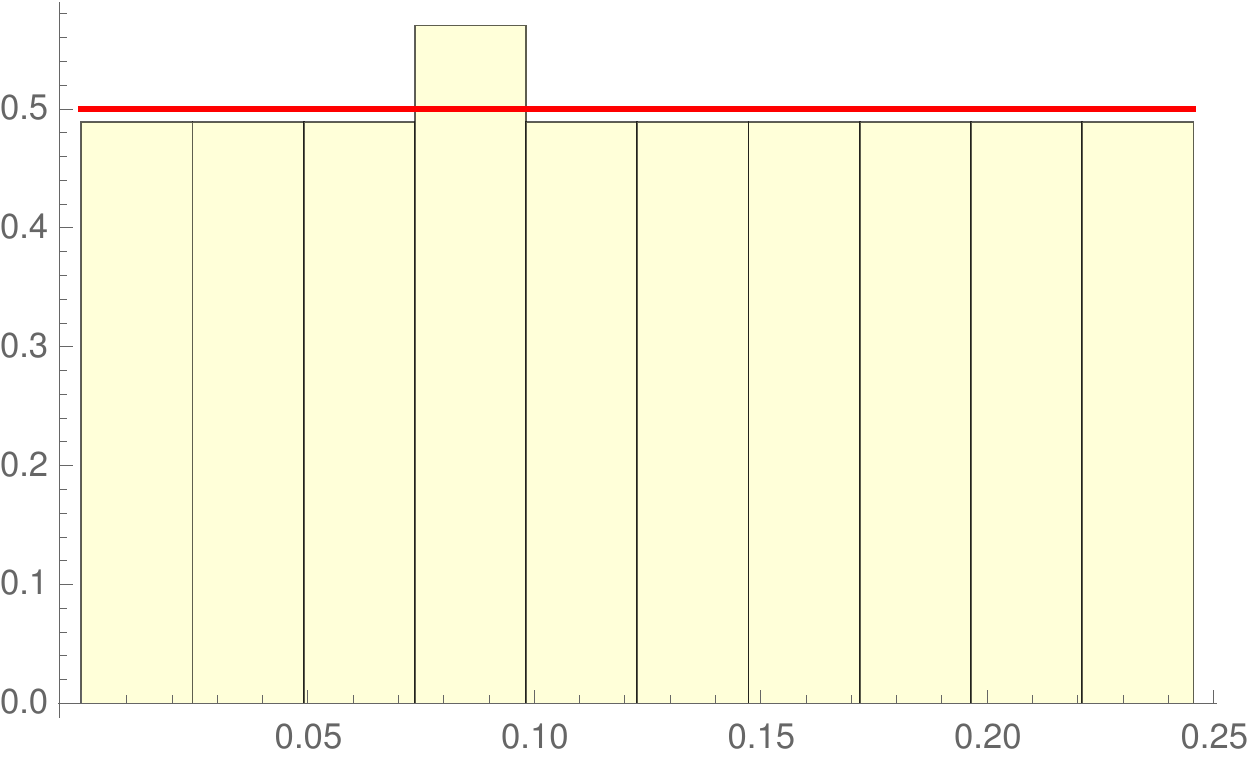}
        \caption{ The uniform density~$\dd\mu_{2}^{(0.25)}/\dd x=0.5$ and histogram of roots of $Q_{500}^{(0.25)}$ localized in the interval $[0,\xi(0.25)]\approx[0,0.246]$.
        }
    \end{subfigure}
    \begin{subfigure}[b]{0.04\textwidth}
    $\,$
    \end{subfigure}
    \begin{subfigure}[b]{0.45\textwidth}
        \includegraphics[width=\textwidth]{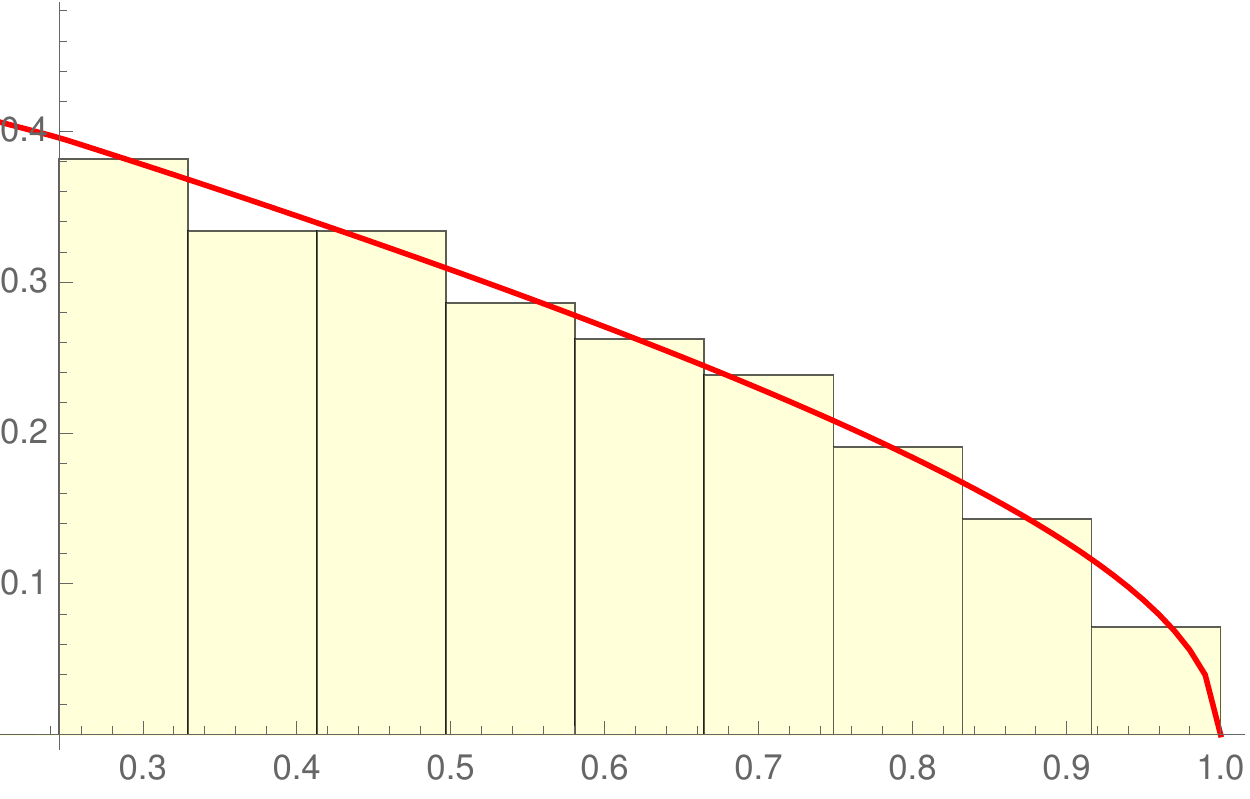}
        \caption{Density~$\dd\mu_{1}^{(0.25)}/\dd x$ and histogram of roots of $Q_{500}^{(0.5)}$ localized in~$(0,1)+\ii(0,0.5)$.\\}
    \end{subfigure}
    \caption{For $\alpha=0.25$ (top) and $\alpha=0.5$ (bottom), densities of $\mu_{1}^{(\alpha)}$ (right) and $\mu_{2}^{(\alpha)}$ (left) are plotted in red and compared with the corresponding distribution of roots of $Q_{n}^{(\alpha)}$ for $n=500$.}
    \label{fig:density}
\end{figure}

Lastly, we illustrate an evolution of limiting curves on which the roots of $Q_{n}^{(\alpha)}$ cluster, as $n\to\infty$, i.e., $\supp\mu_{1}^{(\alpha)}\cup\supp\mu_{2}^{(\alpha)}$, when $\alpha$ is increasing. For any $\alpha>0$, the limiting curve is always a union of an arc connecting $1+2\ii\alpha$ with the intersection point~$\xi(\alpha)$ and the line segment between $\xi(\alpha)$ and the origin. In order to compare the limiting curves for different values of~$\alpha$, we scale the imaginary part of the variable by $1/(2\alpha)$ to keep the curves in the fixed domain $[0,1]+\ii[0,1]$ independent of~$\alpha$. This is plotted in~Figure~\ref{fig:limcurves}.

\begin{figure}[htb!]
	\centering
	\includegraphics[width=0.9\textwidth]{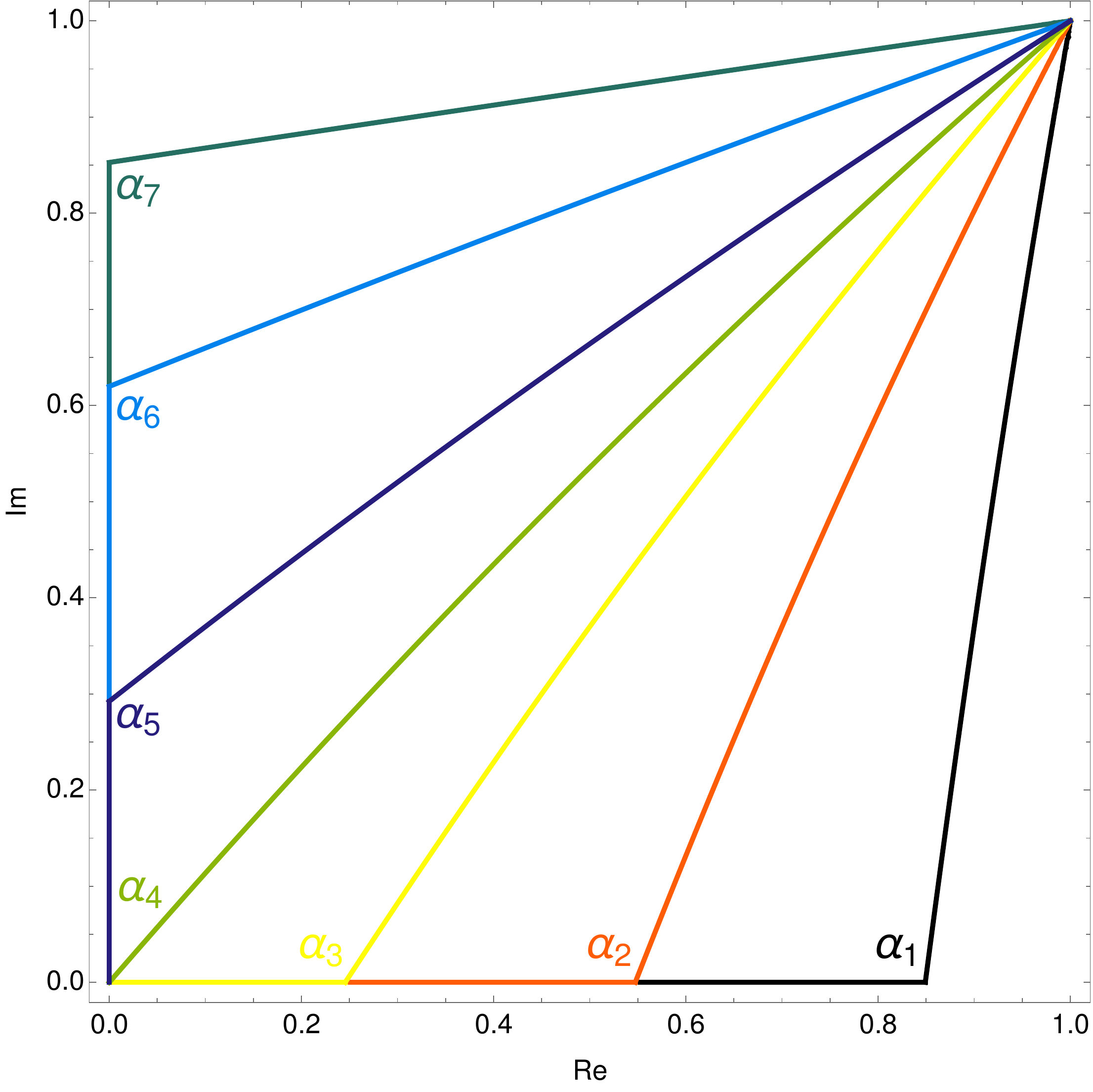}
	\caption{The limiting curves, i.e., $\supp\mu_{1}^{(\alpha)}\cup\supp\mu_{2}^{(\alpha)}$, with the imaginary part of the variable scaled by $1/(2\alpha)$ plotted for 7 values of~$\alpha$: $\alpha_{1}=0.05$, $\alpha_{2}=0.15$, $\alpha_{3}=0.25$, $\alpha_{4}\equiv\alpha_{0}\approx0.33$, $\alpha_{5}=0.45$, $\alpha_{6}=0.8$, and $\alpha_{7}=2$,}
	\label{fig:limcurves}
\end{figure}

\section{Asymptotic eigenvalue distribution of complex sampling Jacobi matrices}\label{sec:appl}

Theorem~\ref{thm:main} represents a solvable instance of a more general problem. Suppose $a,b:[0,1]\to\C$ are given continuous functions and define a sequence of Jacobi matrices whose diagonals are determined by sampling the values of functions $a,b$ on the regular grid as follows:
\begin{equation}
 J_{n}(a,b):=\begin{pmatrix} b\left(\frac{1}{n}\right) & a\left(\frac{1}{n}\right)\\[4pt]
 a\left(\frac{1}{n}\right) & b\left(\frac{2}{n}\right) & a\left(\frac{2}{n}\right)\\[4pt]
 & a\left(\frac{2}{n}\right) & b\left(\frac{3}{n}\right) & a\left(\frac{3}{n}\right)\\[4pt]
 & & \ddots & \ddots & \ddots\\[4pt]
 & & & a\left(\frac{n-2}{n}\right) & b\left(\frac{n-1}{n}\right) & a\left(\frac{n-1}{n}\right)\\[4pt]
 & & &  & a\left(\frac{n-1}{n}\right) & b\left(1\right) \end{pmatrix},
\label{eq:def_J_n} 
\end{equation}
for $n\in\N$. To~$J_{n}(a,b)$, one associates the corresponding eigenvalue-counting measure~$\mu_{n}(a,b)$ which is nothing but the root-counting measure of the characteristic polynomial of $J_{n}(a,b)$. The open problem is whether the weak limit of~$\mu_{n}(a,b)$ exists as $n\to\infty$. Moreover, one naturally expects that the limiting measure~$\mu(a,b)$, provided that it exists, should be describable in terms of the functions~$a$ and~$b$ in a certain way. 

This problem has been solved for real-valued functions $a$, $b$, i.e., for self-adjoint Jacobi matrices $J_{n}(a,b)$. Using different notation and slightly more general setting, the authors of~\cite{kui-van_99} deduced the asymptotic zero distribution of polynomials~$p_{n}^{(n)}$ determined by the recurrence
\begin{equation}
 p_{k}^{(n)}(z)=\left(z-b\left(\frac{k}{n}\right)\right)p_{k-1}^{(n)}(z)-a^{2}\left(\frac{k-1}{n}\right)p_{k-2}^{(n)}(z), \quad k=1,2,\dots,n,
\label{eq:recur_monic-ogp}
\end{equation}
with initial conditions $p_{-1}^{(n)}(z)=0$ and $p_{0}^{(n)}(z)=1$. It is easy to see that $p_{n}^{(n)}(z)=\det(z-J_{n}(a,b))$. Theorem~\ref{thm:main} describes~$\mu(a,b)$ in a simple but non-self-adjoint setting when
\[
 a(x)=\ii\alpha \quad \mbox{ and } \quad b(x)=2x-1
\]
since, in this particular case, one has
\[
 p_{n}^{(n)}(z)=\left(-\ii\alpha\right)^{n}Q_{n}^{(\alpha)}\left(z-\frac{1}{n}\right),
\]
for all $n\in\N$ and $z\in\C$, see the proof of Lemma~\ref{lem:local}.

The three-term recurrence~\eqref{eq:recur_monic-ogp} is relevant in context of orthogonal polynomials. However, there is no need to restrict the problem to tridiagonal matrices only. Considering matrices with more nonzero diagonals sampled by values of given functions leads to the case of generalized Toeplitz matrices (also referred to as locally Toeplitz, variable coefficient Toeplitz, Kac--Murdock--Szeg{\H o}, etc.). Under some assumptions, first of all the self-adjointness, the asymptotic eigenvalue distribution of generalized Toeplitz matrices was deduced already by Kac, Murdock, and Szeg{\H o} in~\cite{kac-mur-sze_53} and rediscovered later by Tilli~\cite{til_98}. Later on, this research led to a development of the theory of generalized locally Toeplitz sequences with applications in numerical analysis of differential equations~\cite{serra-capizzano17, serra-capizzano18}. 

Relaxing the self-adjointness assumption, it seems that the asymptotic eigenvalue distribution is known only for banded Toeplitz matrices~\cite{bot-gru_05, hir_67, sch-spi_60}, Toeplitz matrices with rational symbols~\cite{day_75a, day_75b} or in very particular cases. Few more works relevant in the non-self-adjoint setting are~\cite{golinski-serra-capizzano_07, til_99}. Recently, the authors of~\cite{bou-loy-tyl_18} conjectured, see Problem~3, that the asymptotic eigenvalue distribution of certain non-self-adjoint generalized banded Toeplitz matrices exists and its support equals a union of a finite number of pairwise disjoint open analytic arcs and a finite number of certain exceptional points - a situation familiar from the case of complex banded Toeplitz matrices~\cite{bot-gru_05}. Theorem~\ref{thm:main} is in agreement with this conjecture.

\begin{rem}
As kindly pointed out by an anonymous reviewer, an approach based on trajectories of quadratic differentials and critical measures, which was successfully applied in problems similar to the one solved in this paper, could provide an alternative view on the measure from Theorem~\ref{thm:main}. The reader may consult article~\cite{mar-fin-rak_16} as well as~\cite{mar-fin-rak_11,rakhmanov_12} for more details on the so-called GRS theory. Although the applicability of this approach is fairly general, it can be applied readily when a non-Hermitian orthogonality, with a density which is even allowed to vary with the index (see~\cite{gon-rak_87}), is available rather than when a recurrence with varying coefficients such as~\eqref{eq:recur_monic-ogp} is the starting point. No kind of non-Hermitian orthogonality was observed for the family $Q_{n}^{(\alpha)}$ though. From broader perspective, it would be very interesting if ideas of the GRS theory may also cast some light on the open spectral-theoretic problems for structured non-Hermitian matrices with varying entries.
\end{rem}

\section*{Acknowledgement}
The research of P.~B. was supported by GA{\v C}R grant No. 201/12/G028.
F.~{\v S}. acknowledges financial support by the GA{\v C}R grant No. 20-17749X.

\bibliographystyle{acm}

\begin{thebibliography}{10}

\bibitem{aba-bog_16}
{\sc Abathun, A., and B\o~gvad, R.}
\newblock Asymptotic distribution of zeros of a certain class of hypergeometric
  polynomials.
\newblock {\em Comput. Methods Funct. Theory 16}, 2 (2016), 167--185.

\bibitem{aba-bog_18}
{\sc Abathun, A., and B\o~gvad, R.}
\newblock Zeros of a certain class of {G}auss hypergeometric polynomials.
\newblock {\em Czechoslovak Math. J. 68(143)}, 4 (2018), 1021--1031.

\bibitem{bot-gru_05}
{\sc B\"{o}ttcher, A., and Grudsky, S.~M.}
\newblock {\em Spectral properties of banded {T}oeplitz matrices}.
\newblock Society for Industrial and Applied Mathematics (SIAM), Philadelphia,
  PA, 2005.

\bibitem{bou-loy-tyl_18}
{\sc Bourget, A., Loya, A.~A., and McMillen, T.}
\newblock Spectral asymptotics for {K}ac-{M}urdock-{S}zeg\H{o} matrices.
\newblock {\em Jpn. J. Math. 13}, 1 (2018), 67--107.

\bibitem{boy-goh_07}
{\sc Boyer, R., and Goh, W. M.~Y.}
\newblock On the zero attractor of the {E}uler polynomials.
\newblock {\em Adv. in Appl. Math. 38}, 1 (2007), 97--132.

\bibitem{boy-goh_08}
{\sc Boyer, R.~P., and Goh, W. M.~Y.}
\newblock Polynomials associated with partitions: asymptotics and zeros.
\newblock In {\em Special functions and orthogonal polynomials}, vol.~471 of
  {\em Contemp. Math.} Amer. Math. Soc., Providence, RI, 2008, pp.~33--45.

\bibitem{boy-goh_10}
{\sc Boyer, R.~P., and Goh, W. M.~Y.}
\newblock Appell polynomials and their zero attractors.
\newblock In {\em Gems in experimental mathematics}, vol.~517 of {\em Contemp.
  Math.} Amer. Math. Soc., Providence, RI, 2010, pp.~69--96.

\bibitem{chihara78}
{\sc Chihara, T.~S.}
\newblock {\em An introduction to orthogonal polynomials}.
\newblock Gordon and Breach Science Publishers, New York-London-Paris, 1978.
\newblock Mathematics and its Applications, Vol. 13.

\bibitem{day_75a}
{\sc Day, K.~M.}
\newblock Measures associated with {T}oeplitz matrices generated by the
  {L}aurent expansion of rational functions.
\newblock {\em Trans. Amer. Math. Soc. 209\/} (1975), 175--183.

\bibitem{day_75b}
{\sc Day, K.~M.}
\newblock Toeplitz matrices generated by the {L}aurent series expansion of an
  arbitrary rational function.
\newblock {\em Trans. Amer. Math. Soc. 206\/} (1975), 224--245.

\bibitem{dea-huy-kui_10}
{\sc Dea\~{n}o, A., Huybrechs, D., and Kuijlaars, A. B.~J.}
\newblock Asymptotic zero distribution of complex orthogonal polynomials
  associated with {G}aussian quadrature.
\newblock {\em J. Approx. Theory 162}, 12 (2010), 2202--2224.

\bibitem{dia-men-ori_11}
{\sc D{\'i}az~Mendoza, C., and Orive, R.}
\newblock The {S}zeg{\H o} curve and {L}aguerre polynomials with large negative
  parameters.
\newblock {\em J. Math. Anal. Appl. 379}, 1 (2011), 305--315.

\bibitem{dic_54}
{\sc Dickinson, D.}
\newblock On {L}ommel and {B}essel polynomials.
\newblock {\em Proc. Amer. Math. Soc. 5\/} (1954), 946--956.

\bibitem{dic_58}
{\sc Dickinson, D.}
\newblock On certain polynomials associated with orthogonal polynomials.
\newblock {\em Boll. Un. Mat. Ital. (3) 13\/} (1958), 116--124.

\bibitem{dic-pol-wan_56}
{\sc Dickinson, D.~J., Pollak, H.~O., and Wannier, G.~H.}
\newblock On a class of polynomials orthogonal over a denumerable set.
\newblock {\em Pacific J. Math. 6\/} (1956), 239--247.

\bibitem{dlmf}
{\it NIST Digital Library of Mathematical Functions}.
\newblock http://dlmf.nist.gov/, Release 1.0.15 of 2017-06-01.
\newblock F.~W.~J. Olver, A.~B. {Olde Daalhuis}, D.~W. Lozier, B.~I. Schneider,
  R.~F. Boisvert, C.~W. Clark, B.~R. Miller and B.~V. Saunders, eds.

\bibitem{dri-jor_03}
{\sc Driver, K., and Jordaan, K.}
\newblock Asymptotic zero distribution of a class of {$_3F_2$} hypergeometric
  functions.
\newblock {\em Indag. Math. (N.S.) 14}, 3-4 (2003), 319--327.

\bibitem{dri-joh_07}
{\sc Driver, K.~A., and Johnston, S.~J.}
\newblock Asymptotic zero distribution of a class of hypergeometric
  polynomials.
\newblock {\em Quaest. Math. 30}, 2 (2007), 219--230.

\bibitem{dur-gui_01}
{\sc Duren, P.~L., and Guillou, B.~J.}
\newblock Asymptotic properties of zeros of hypergeometric polynomials.
\newblock {\em J. Approx. Theory 111}, 2 (2001), 329--343.

\bibitem{erdelyi-etal81}
{\sc Erd\'{e}lyi, A., Magnus, W., Oberhettinger, F., and Tricomi, F.~G.}
\newblock {\em Higher transcendental functions. {V}ol. {II}}.
\newblock Robert E. Krieger Publishing Co., Inc., Melbourne, Fla., 1981.
\newblock Based on notes left by Harry Bateman, Reprint of the 1953 original.

\bibitem{serra-capizzano17}
{\sc Garoni, C., and Serra-Capizzano, S.}
\newblock {\em Generalized locally {T}oeplitz sequences: theory and
  applications. {V}ol. {I}}.
\newblock Springer, Cham, 2017.

\bibitem{serra-capizzano18}
{\sc Garoni, C., and Serra-Capizzano, S.}
\newblock {\em Generalized locally {T}oeplitz sequences: theory and
  applications. {V}ol. {II}}.
\newblock Springer, Cham, 2018.

\bibitem{golinski-serra-capizzano_07}
{\sc Golinskii, L., and Serra-Capizzano, S.}
\newblock The asymptotic properties of the spectrum of nonsymmetrically
  perturbed {J}acobi matrix sequences.
\newblock {\em J. Approx. Theory 144}, 1 (2007), 84--102.

\bibitem{gon-rak_87}
{\sc Gonchar, A.~A., and Rakhmanov, E.~A.}
\newblock Equilibrium distributions and the rate of rational approximation of
  analytic functions.
\newblock {\em Mat. Sb. (N.S.) 134(176)}, 3 (1987), 306--352, 447.

\bibitem{hir_67}
{\sc Hirschman, Jr., I.~I.}
\newblock The spectra of certain {T}oeplitz matrices.
\newblock {\em Illinois J. Math. 11\/} (1967), 145--159.

\bibitem{hormanderI}
{\sc H\"{o}rmander, L.}
\newblock {\em The analysis of linear partial differential operators. {I}}.
\newblock Classics in Mathematics. Springer-Verlag, Berlin, 2003.
\newblock Distribution theory and Fourier analysis, Reprint of the second
  (1990) edition [Springer, Berlin; MR1065993 (91m:35001a)].

\bibitem{kac-mur-sze_53}
{\sc Kac, M., Murdock, W.~L., and Szeg\"{o}, G.}
\newblock On the eigenvalues of certain {H}ermitian forms.
\newblock {\em J. Rational Mech. Anal. 2\/} (1953), 767--800.

\bibitem{kui-mar-fin_04}
{\sc Kuijlaars, A. B.~J., and Mart{\' i}nez-Finkelshtein, A.}
\newblock Strong asymptotics for {J}acobi polynomials with varying nonstandard
  parameters.
\newblock {\em J. Anal. Math. 94\/} (2004), 195--234.

\bibitem{kui-mcl_01}
{\sc Kuijlaars, A. B.~J., and McLaughlin, K. T.-R.}
\newblock Riemann-{H}ilbert analysis for {L}aguerre polynomials with large
  negative parameter.
\newblock {\em Comput. Methods Funct. Theory 1}, 1, [On table of contents:
  2002] (2001), 205--233.

\bibitem{kui-mcl_04}
{\sc Kuijlaars, A. B.~J., and McLaughlin, K. T.-R.}
\newblock Asymptotic zero behavior of {L}aguerre polynomials with negative
  parameter.
\newblock {\em Constr. Approx. 20}, 4 (2004), 497--523.

\bibitem{kui-van_99}
{\sc Kuijlaars, A. B.~J., and Van~Assche, W.}
\newblock The asymptotic zero distribution of orthogonal polynomials with
  varying recurrence coefficients.
\newblock {\em J. Approx. Theory 99}, 1 (1999), 167--197.

\bibitem{lee-wong_14}
{\sc Lee, K.~F., and Wong, R.}
\newblock Asymptotic expansion of the modified {L}ommel polynomials
  {$h_{n,\nu}(x)$} and their zeros.
\newblock {\em Proc. Amer. Math. Soc. 142}, 11 (2014), 3953--3964.

\bibitem{mak_68}
{\sc Maki, D.}
\newblock On constructing distribution functions: {W}ith applications to
  {L}ommel polynomials and {B}essel functions.
\newblock {\em Trans. Amer. Math. Soc. 130\/} (1968), 281--297.

\bibitem{mar-fin-etal_00}
{\sc Mart{\' i}nez-Finkelshtein, A., Mart{\' i}nez-Gonz{\' a}lez, P., and
  Orive, R.}
\newblock Zeros of {J}acobi polynomials with varying non-classical parameters.
\newblock In {\em Special functions ({H}ong {K}ong, 1999)}. World Sci. Publ.,
  River Edge, NJ, 2000, pp.~98--113.

\bibitem{mar-fin-etal_01}
{\sc Mart{\' i}nez-Finkelshtein, A., Mart{\' i}nez-Gonz{\' a}lez, P., and
  Orive, R.}
\newblock On asymptotic zero distribution of {L}aguerre and generalized
  {B}essel polynomials with varying parameters.
\newblock In {\em Proceedings of the {F}ifth {I}nternational {S}ymposium on
  {O}rthogonal {P}olynomials, {S}pecial {F}unctions and their {A}pplications
  ({P}atras, 1999)\/} (2001), vol.~133, pp.~477--487.

\bibitem{mar-fin-ori_05}
{\sc Mart{\' i}nez-Finkelshtein, A., and Orive, R.}
\newblock Riemann-{H}ilbert analysis of {J}acobi polynomials orthogonal on a
  single contour.
\newblock {\em J. Approx. Theory 134}, 2 (2005), 137--170.

\bibitem{mar-fin-rak_11}
{\sc Mart\'{\i}nez-Finkelshtein, A., and Rakhmanov, E.~A.}
\newblock Critical measures, quadratic differentials, and weak limits of zeros
  of {S}tieltjes polynomials.
\newblock {\em Comm. Math. Phys. 302}, 1 (2011), 53--111.

\bibitem{mar-fin-rak_16}
{\sc Mart\'{\i}nez-Finkelshtein, A., and Rakhmanov, E.~A.}
\newblock Do orthogonal polynomials dream of symmetric curves?
\newblock {\em Found. Comput. Math. 16}, 6 (2016), 1697--1736.

\bibitem{olver54}
{\sc Olver, F. W.~J.}
\newblock The asymptotic expansion of {B}essel functions of large order.
\newblock {\em Philos. Trans. Roy. Soc. London. Ser. A. 247\/} (1954),
  328--368.

\bibitem{olver97}
{\sc Olver, F. W.~J.}
\newblock {\em Asymptotics and special functions}.
\newblock AKP Classics. A K Peters, Ltd., Wellesley, MA, 1997.
\newblock Reprint of the 1974 original [Academic Press, New York; MR0435697 (55
  \#8655)].

\bibitem{rakhmanov_12}
{\sc Rakhmanov, E.~A.}
\newblock Orthogonal polynomials and {$S$}-curves.
\newblock In {\em Recent advances in orthogonal polynomials, special functions,
  and their applications}, vol.~578 of {\em Contemp. Math.} Amer. Math. Soc.,
  Providence, RI, 2012, pp.~195--239.

\bibitem{saff-totik97}
{\sc Saff, E.~B., and Totik, V.}
\newblock {\em Logarithmic potentials with external fields}, vol.~316 of {\em
  Grundlehren der Mathematischen Wissenschaften [Fundamental Principles of
  Mathematical Sciences]}.
\newblock Springer-Verlag, Berlin, 1997.
\newblock Appendix B by Thomas Bloom.

\bibitem{sch-spi_60}
{\sc Schmidt, P., and Spitzer, F.}
\newblock The {T}oeplitz matrices of an arbitrary {L}aurent polynomial.
\newblock {\em Math. Scand. 8\/} (1960), 15--38.

\bibitem{son-won_17}
{\sc Song, Z., and Wong, R.}
\newblock Asymptotics of pseudo-{J}acobi polynomials with varying parameters.
\newblock {\em Stud. Appl. Math. 139}, 1 (2017), 179--217.

\bibitem{til_98}
{\sc Tilli, P.}
\newblock Locally {T}oeplitz sequences: spectral properties and applications.
\newblock {\em Linear Algebra Appl. 278}, 1-3 (1998), 91--120.

\bibitem{til_99}
{\sc Tilli, P.}
\newblock Some results on complex {T}oeplitz eigenvalues.
\newblock {\em J. Math. Anal. Appl. 239}, 2 (1999), 390--401.

\bibitem{vladimirov84}
{\sc Vladimirov, V.~S.}
\newblock {\em Equations of mathematical physics}.
\newblock ``Mir'', Moscow, 1984.
\newblock Translated from the Russian by Eugene Yankovsky [E. Yankovski\u{\i}].

\bibitem{wan-won_16}
{\sc Wang, X.-S., and Wong, R.}
\newblock Asymptotics of {R}acah polynomials with varying parameters.
\newblock {\em J. Math. Anal. Appl. 436}, 2 (2016), 1149--1164.

\bibitem{watson44}
{\sc Watson, G.~N.}
\newblock {\em A {T}reatise on the {T}heory of {B}essel {F}unctions}.
\newblock Cambridge University Press, Cambridge, England; The Macmillan
  Company, New York, 1944.

\bibitem{won-zha_06}
{\sc Wong, R., and Zhang, W.}
\newblock Uniform asymptotics for {J}acobi polynomials with varying large
  negative parameters---a {R}iemann-{H}ilbert approach.
\newblock {\em Trans. Amer. Math. Soc. 358}, 6 (2006), 2663--2694.

\bibitem{sri-wan-zho_12}
{\sc Zhou, J.-R., Srivastava, H.~M., and Wang, Z.-G.}
\newblock Asymptotic distributions of the zeros of a family of hypergeometric
  polynomials.
\newblock {\em Proc. Amer. Math. Soc. 140}, 7 (2012), 2333--2346.

\end{thebibliography}

\end{document}